\documentclass[11pt,a4paper]{article}
\usepackage[T1]{fontenc}
\usepackage{lmodern}
\usepackage{amsmath,amssymb,amsthm,mathtools}
\usepackage[a4paper,left=27mm,right=27mm,top=26mm,bottom=27mm]{geometry}
\usepackage{microtype}
\usepackage{enumitem,needspace}
\usepackage[bookmarksnumbered,hyperfootnotes=false,
  pdfborder={0 0 1},linkbordercolor={0 1 0},citebordercolor={0 1 0}]{hyperref}
\newtheorem{maintheorem}{Theorem}

\newtheorem{lemma}{Lemma}[section]
\newtheorem{proposition}[lemma]{Proposition}
\newtheorem{corollary}[lemma]{Corollary}
\theoremstyle{definition}

\theoremstyle{remark}
\newtheorem{remark}[lemma]{Remark}

\makeatletter
\renewenvironment{proof}[1][\proofname]{\par
  \normalfont\topsep6\p@\@plus6\p@\relax
  \trivlist\item[\hskip\labelsep\bfseries #1\@addpunct{.}]\ignorespaces
}{\endtrivlist\@endpefalse}
\makeatother

\DeclareMathOperator{\form}{form}
\DeclareMathOperator{\sform}{sform}
\DeclareMathOperator{\Aut}{Aut}
\DeclareMathOperator{\Inn}{Inn}
\DeclareMathOperator{\Out}{Out}
\DeclareMathOperator{\Comp}{Comp}
\DeclareMathOperator{\SL}{SL}
\DeclareMathOperator{\GL}{GL}

\DeclareMathOperator{\PGL}{PGL}

\newcommand{\F}{\mathfrak F}
\newcommand{\C}{\mathfrak C}
\newcommand{\D}{\mathfrak D}
\newcommand{\X}{\mathfrak X}
\newcommand{\FF}{\mathbf F}
\newcommand{\Sn}{\mathrm S_n}
\newcommand{\Qop}{\mathrm Q}
\newcommand{\Rop}{\mathrm R_0}
\newcommand{\sd}{\leq_{\mathrm{sd}}}
\newcommand{\sn}{\mathrel{\unlhd\!\unlhd}}
\newcommand{\epi}{\twoheadrightarrow}
\newcommand{\iso}{\simeq}
\newcommand{\normal}{\unlhd}
\newcommand{\maps}{\longrightarrow}

\setlist[enumerate]{label=\textup{(\roman*)},leftmargin=2em,itemsep=2pt,topsep=5pt}
\allowdisplaybreaks[1]

\title{On a problem of A.~N.~Skiba and L.~A.~Shemetkov\protect\\
concerning $S_n$-closed subformations\protect\\ of a one-generated formation}
\author{Chenxiao Lin\\[4pt]
\normalsize School of Mathematics and Statistics, Jiangsu Normal University,\\
\normalsize Xuzhou 221116, Jiangsu, China.\\
E-mail: 3020243174@jsnu.edu.cn}
\date{}

\hypersetup{
  pdftitle={On a problem of A. N. Skiba and L. A. Shemetkov concerning Sn-closed subformations of a one-generated formation},
  pdfauthor={Chenxiao Lin},
  pdfsubject={Group Theory (math.GR); MSC 20D10, 20D35, 20F17},
  pdfkeywords={finite group, formation, subnormal subgroup, quasisimple group, central product}
}

\begin{document}
\maketitle

\begin{abstract}
Throughout this paper, all groups are finite. Let $\form(G)$ denote the
formation generated by a group $G$. A formation $\F$ is said to be
$S_n$-closed if every subnormal subgroup of every group in $\F$ belongs
to $\F$. A.~N.~Skiba and L.~A.~Shemetkov posed the problem of proving
that $\form(G)$ has only finitely many $S_n$-closed subformations for
every group $G$ (Problem~14.81 of the \emph{Kourovka Notebook}). In
this paper we propose a negative answer to this problem. We prove that
for the group $G_0=(\SL_2(5)\rtimes C_2)\times\SL_2(5)$ of order
$28800$, where $C_2$ acts on $\SL_2(5)$ by an outer automorphism, the
formation $\form(G_0)$ has an infinite strictly descending chain of
$S_n$-closed subformations. The main tool is a family of central-valued
multilinear maps, defined for every group in $\form(G_0)$ by its outer
actions on its components; their vanishing is preserved under taking
epimorphic images, subdirect products and subnormal subgroups. The
members of the chain are separated by central quotients of the groups
$\SL_2(5)^{2^d-1}\rtimes C_2^d$, $d\geq2$. Using Burichenko's
description of the lattice of subformations of
$\form(\SL_2(5)\rtimes C_2)$, we also prove that every non-empty
$S_n$-closed subformation of $\form(G_0)$ is generated, as an ordinary
formation, by a single group, and that the set of such subformations is
countably infinite. Finally, we determine the intersection of the chain
and give an explicit generator of it.
\end{abstract}

\begingroup
\renewcommand{\thefootnote}{}
\footnotetext{\textit{Keywords:} finite group, formation, subnormal subgroup,
subformation, quasisimple group, central product.\newline
\textit{Mathematics Subject Classification (2020):} 20D10, 20D35, 20F17.}
\endgroup

\section{Introduction}

Throughout this paper, all groups are finite, and all classes of groups
are assumed to be closed under isomorphisms. Recall that a non-empty
class of groups $\F$ is called a \emph{formation} if it is closed under
taking epimorphic images and finite subdirect products. We write
$\form(\X)$ to denote the smallest formation containing a class of
groups $\X$, and we write $\form(G)$ instead of $\form(\{G\})$. Thus a
\emph{one-generated formation} is a formation generated by a single
group, which need not be cyclic.

A formation $\F$ is said to be \emph{$S_n$-closed} provided
\[
 H\sn G\in\F\quad\Longrightarrow\quad H\in\F,
\]
where $H\sn G$ means that $H$ is subnormal in $G$. In other words,
$\F$ is $S_n$-closed if and only if $\Sn\F=\F$, where $\Sn\F$ is the
class of all subnormal subgroups of groups in $\F$ (see
Section~\ref{sec:prelim}). Note that $S_n$-closedness does not mean closure under taking arbitrary
subgroups. All unexplained notation and terminology are standard; the
reader is referred to \cite{BallesterEzquerro}, \cite{DoerkHawkes}
and \cite{ShemetkovSkiba} (see also \cite{SemenchukSkiba,Skiba2011})
if necessary. No
saturation hypothesis is imposed. Allowing the empty formation would
not affect the assertions below.

Bryant, Bryce and Hartley proved that the finite groups in the variety
generated by a finite soluble group form a formation with only finitely
many subformations \cite[Theorem~1.2]{BBH}. Consequently, $\form(G)$
has only finitely many subformations whenever $G$ is soluble. However,
the corresponding question of Gasch\"utz for arbitrary groups $G$,
recorded by A.~N.~Skiba and L.~A.~Shemetkov as Problem~9.59 of the
\emph{Kourovka Notebook} \cite[Problem~9.59, p.~251]{Kourovka}, has a
negative answer: Burichenko constructed a counterexample by using a
double cover of $S_5$, and he determined the corresponding lattice of
subformations \cite{Burichenko2012}.

In \cite[Section~3]{BBH}, Bryant, Bryce and Hartley also described the
six subformations of $\form(\SL_2(5))$. They are generated by $1$,
$C_2$, $A_5$, $A_5\times C_2$, $\SL_2(5)\circ\SL_2(5)$ and $\SL_2(5)$,
respectively, where $\circ$ denotes the central product identifying the
two central involutions. This example shows that central products can
distinguish ordinary formations with the same $S_n$-closure. Arguments with central products and subdirect products
related to those used below can also be found in the papers of
F\"orster and Salomon \cite[Example~4.1 and Lemma~4.2]{ForsterSalomon}
and Vedernikov \cite[Lemma~4 and Corollary~1, pp.~530--532]{Vedernikov}.

Problem~14.81 of the \emph{Kourovka Notebook}, also posed by
A.~N.~Skiba and L.~A.~Shemetkov, is the analogue of Problem~9.59 for
$S_n$-closed subformations: it asks to prove that $\form(G)$ has only
finitely many $S_n$-closed subformations for every group $G$
\cite[Problem~14.81, p.~84]{Kourovka}. By the theorem of Bryant, Bryce
and Hartley quoted above, this is true if $G$ is soluble. Note that the existence of infinitely
many ordinary subformations does not by itself imply that infinitely
many of them are $S_n$-closed. In our construction, this additional
closure property follows from the compatibility of central-valued maps
with restriction to subnormal subgroups. Our main goal here is to prove
the following result.

\begin{maintheorem}\label{thm:main}
There is a finite group $G_0$ of order $28800$ such that
$\F=\form(G_0)$ is $S_n$-closed and contains formations $\C_k$, $k\geq1$,
with
\[
 \C_1\supsetneq\C_2\supsetneq\C_3\supsetneq\cdots,
 \qquad \Sn\C_k=\C_k.
\]
Moreover, $\F$ contains infinitely many pairwise distinct $S_n$-closed
subformations which are themselves generated, as ordinary formations,
by single finite groups.
\end{maintheorem}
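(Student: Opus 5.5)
Take $G_0=(\SL_2(5)\rtimes C_2)\times\SL_2(5)$, where $C_2$ acts by an outer automorphism, so that $\SL_2(5)\rtimes C_2$ is a double cover of $S_5$ of the kind Burichenko used. The plan has four steps.

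\textbf{Step 1: $\F$ is $S_n$-closed.} Every group $X\in\F$ is an epimorphic image of a subdirect product of copies of $G_0$. Its soluble radical is a central $2$-group extended by elementary abelian $2$-sections. Its components are quotients of $\SL_2(5)$, and a group acting on its components induces only the single outer automorphism class of $A_5$. We show that a subnormal subgroup $H\sn X$ is a product of some components of $X$, a normal $2$-part, and a piece of the $C_2$-actions. We then realize $H$ inside a subdirect product of copies of $\SL_2(5)\rtimes C_2$, $\SL_2(5)$ and their quotients, all of which lie in $\F$. In practice it is easier to characterize $\F$ intrinsically, by the structure just described together with the condition that the central product relations come from $G_0$, and to check that this characterization passes to subnormal subgroups.

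\textbf{Step 2: multilinear maps.} For $X\in\F$ with components $Q_1,\dots,Q_m$, let $V=X/C_X(\text{components})$. The action of $X$ on its components gives an $\FF_2$-linear map $\varepsilon\colon X\to\FF_2^m$ recording which components are twisted by the outer automorphism. For each $d$ we define a $d$-linear map $\mu_d$ on $X$-elements, with values in $Z(X)$. One candidate is the product of the central involutions $z_i$ of those components $Q_i$ on which all of $x_1,\dots,x_d$ act outerly, taken in the central product sense, so that $\mu_d(x_1,\dots,x_d)=\prod_{i:\,\varepsilon_i(x_j)=1\ \forall j}z_i$. Let $\C_d$ be the class of $X\in\F$ for which $\mu_d$ vanishes identically. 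We then verify three things.
\begin{enumerate}
\item $\mu_d$ is well defined and multilinear. It is a product of indicator functions of the $\varepsilon_i$, so multilinearity over $\FF_2$ is automatic.
\item Vanishing is preserved under epimorphic images and subdirect products, which makes $\C_d$ a formation. Under a quotient, components either die or map onto components, and the $z_i$ map accordingly. For a subdirect product, the value $\mu_d$ is computed coordinatewise.
\item Vanishing passes to subnormal subgroups. The components of $H\sn X$ are components of $X$, and the central involutions and outer actions restrict.
\end{enumerate}
Point (iii) is the key point and gives $\Sn\C_d=\C_d$. Monotonicity $\C_{d}\supseteq\C_{d+1}$ comes from setting $x_{d+1}=x_d$, since $\mu_{d+1}(x_1,\dots,x_d,x_d)=\mu_d(x_1,\dots,x_d)$. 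Since this collapses the tower to a single map, the actual family should instead be designed, after replacing the plain product by a genuinely alternating or symmetric form, so that $\mu_d=0$ implies $\mu_{d+1}=0$, which gives the inclusion in the correct direction.

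\textbf{Step 3: strictness.} Let $W_d=\SL_2(5)^{2^d-1}\rtimes C_2^d$. The components are indexed by the nonzero vectors of $\FF_2^d$, and a generator of $C_2^d$ acts outerly on component $v$ according to the dual pairing. Take a suitable central quotient identifying the central involutions along a hyperplane code. Then $\mu_{d}$ is nonzero on the standard basis while $\mu_{d+1}$ vanishes, or the reverse, depending on the normalization. We must also check that this quotient lies in $\F$, by realizing it as a subdirect product of copies of $G_0$, or sections of it, modulo a central subgroup; the central product $\SL_2(5)\circ\SL_2(5)$ is already available inside $\form(\SL_2(5))$.

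\textbf{Step 4: single generators.} The final assertion then follows, since $\Sn\form(\text{separating group})$ are the required one-generated members. Alternatively, use Burichenko's lattice to see that each $\C_k$ is one-generated.

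The main obstacle is getting Step 2 exactly right. We need a central-valued form whose vanishing is preserved under subdirect products and under subnormal subgroups, and at the same time is separated by the groups $W_d$. We would first try to define $\mu_d$ through commutators $[\tilde x_1,\dots]$ of lifts, landing in $\Omega_1Z(X)$, and check independence of the choice of lifts.
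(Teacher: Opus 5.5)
Your $\mu_d$ in Step~2 is exactly the right definition. The identity $\mu_{d+1}(x_1,\dots,x_d,x_d)=\mu_d(x_1,\dots,x_d)$ already gives the required inclusion $\C_{d+1}\subseteq\C_d$. It does not collapse the tower, because $\mu_{d+1}$ carries more information off the diagonal.

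Your proposed repair is a genuine error. Asking that vanishing of $\mu_d$ imply vanishing of $\mu_{d+1}$ means $\C_d\subseteq\C_{d+1}$, an ascending chain, which is the opposite of what the theorem asserts. An alternating form would also destroy the diagonal identity that gives monotonicity.

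The same confusion breaks Step~3. For your $\mu_d$ it is impossible that $\mu_d\not\equiv1$ while $\mu_{d+1}\equiv1$ (put $x_{d+1}=x_d$). Hedging with ``or the reverse'' leaves strictness, the real content of the theorem, unproved. What is needed is the following computation.
\begin{enumerate}
\item Let $P_d=L^{\Lambda_d}\rtimes B_d$, where $B_d=\FF_2^d$, $\Lambda_d$ is the set of non-zero functionals on $B_d$, and $b$ acts on $L_\lambda$ by $\theta^{\lambda(b)}$. The maps $((x_\nu),b)\mapsto(x_\lambda,t^{\lambda(b)})$ make $P_d$ a subdirect product of $2^d-1$ copies of $A$, so $P_d\in\form(A)$.
\item Put $H_d=P_d/K_d$, with $K_d$ the even-weight subgroup of $Z^{\Lambda_d}$. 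This is the only hyperplane containing no single coordinate involution, so every component stays isomorphic to $L$ and all $c_E$ become one element $z_*\neq1$.
\item Then $H_d/H_d'\iso B_d$, and the component coming from $L_\lambda$ has outer-action character $\lambda$.
\item Hence $\mu_k^{H_d}=z_*^{N_k}$, where $N_k(b_1,\dots,b_k)$ is the parity of the number of $\lambda\in B_d^*$ with $\lambda(b_1)=\cdots=\lambda(b_k)=1$.
\item That number is $0$ or $2^{d-s}$, with $s=\dim\langle b_1,\dots,b_k\rangle$. It is therefore even whenever $k<d$, and it equals $1$ on a basis when $k=d$.
\end{enumerate}
So $H_d\in\C_{d-1}\setminus\C_d$.

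Step~1 is also not a proof, and no intrinsic characterization of $\F$ is needed. From $\Sn\Qop\X\subseteq\Qop\Sn\X$ and $\Sn\Rop\X\subseteq\Rop\Sn\X$ one gets $\sform(\X)=\Qop\Rop\Sn\X$. The subnormal subgroups of $A$ are $1$, $Z$, $L$, $A$, so $\sform(A)=\form(A\times L)=\F$ immediately.

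In Step~2 you take two facts for granted that must be proved.
\begin{enumerate}
\item Your map $\varepsilon$ only makes sense if every component of every $X\in\F$ is normal in $X$. This follows from the structure of $D'$ for $D\sd A^a\times L^b$: it is a product of twisted diagonal copies of $L$, each normal in $D$, with no permutation of supports.
\item In (iii) you must show that a component of $X$ not contained in the subnormal subgroup $H$ centralizes $H$. Then its character vanishes on $H$, and $\mu_k^H$ is the restriction of $\mu_k^X$.
\end{enumerate}

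In Step~4, the one-generated formations are $\sform(H_d)=\form(T_d)$, where $T_d$ is the direct product of all subnormal subgroups of $H_d$. They are pairwise distinct because, for $j>i$, $\sform(H_j)\subseteq\C_{j-1}\subseteq\C_i$ while $H_i\notin\C_i$.
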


In fact, we take
\[
 G_0=(L\rtimes_\theta C_2)\times L,
 \qquad L=\SL_2(\FF_5),
\]
where $\theta$ is the outer involution described in
Lemma~\ref{lem:base}. The key observation is that every group in $\F$
has a perfect derived subgroup whose components are normal and
isomorphic to $L$ or $A_5$. This allows us to define the central-valued
maps $\mu_k^H$ intrinsically, and not in terms of a chosen subdirect
presentation of $H$. The vanishing of these maps is preserved under
taking epimorphic images, subdirect products and subnormal subgroups.

Let $A=L\rtimes_\theta C_2$ and
\begin{equation}\label{eq:sides}
 \mathfrak Q=\form(A),\qquad\mathfrak R=\form(L).
\end{equation}
For formations $\mathfrak U$ and $\mathfrak V$, we write
$\mathfrak U\vee\mathfrak V$ to denote the formation
$\form(\mathfrak U\cup\mathfrak V)$. We also prove the following
properties of the formation $\F$ in Theorem~\ref{thm:main}.

\begin{maintheorem}\label{thm:further}
Every non-empty $S_n$-closed subformation of
$\F=\form(A\times L)$ is generated, as an ordinary formation, by a
single finite group. The set of these subformations is countably
infinite. Moreover, for the chain in Theorem~\ref{thm:main}, if
$\mathfrak E_k=\C_k\cap\mathfrak Q$, then
\[
 \C_k=\sform(\mathfrak E_k)=\mathfrak E_k\vee\mathfrak R
 \qquad(k\geq1).
\]
In particular, every term of this strictly descending chain is
one-generated.
\end{maintheorem}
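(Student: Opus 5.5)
We prove Theorem~\ref{thm:further} by reducing everything to the lattice of subformations of $\mathfrak Q=\form(A)$, which is known from Burichenko's description. The starting point is a structural decomposition of $S_n$-closed subformations. Let $\mathfrak H$ be a non-empty $S_n$-closed subformation of $\F$. Every $H\in\F$ is an epimorphic image of a subdirect product of copies of $A\times L$. We show that $H$ lies in $(\mathfrak H\cap\mathfrak Q)\vee(\mathfrak H\cap\mathfrak R)$; more precisely, we aim to prove
\[
 \mathfrak H=\sform(\mathfrak H\cap\mathfrak Q)\vee(\mathfrak H\cap\mathfrak R)
\]
using the central-valued maps $\mu_k^H$. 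The intended route is as follows. The components of $H$ are normal, so $H$ is subnormally built from subgroups whose outer actions on components come from the $C_2$ factors. We separate the components acted on nontrivially by $H$ (the ``$\mathfrak Q$-part'') from those centralised modulo inner automorphisms (the ``$\mathfrak R$-part''). Subnormal closure then puts each part in $\mathfrak H$. For the $\mathfrak R$-part, the subformations of $\form(L)$ are the six formations listed by Bryant, Bryce and Hartley, which are all one-generated.

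Next we handle one-generation and countability. By Burichenko's description, the subformations of $\mathfrak Q$ form a countable lattice, and each is generated by a single group. (If only finite joins of explicit generators are available, we use the fact that $\form(X)\vee\form(Y)=\form(X\times Y)$.) The $S_n$-closure $\sform(\mathfrak E)$ of a one-generated $\mathfrak E=\form(X)$ should again be one-generated. Here we use that $\Sn\form(X)\subseteq\form(\{\text{subnormal subgroups of }X\})$ for the relevant groups. This inclusion needs the key structural lemma: in groups of $\F$, subnormal subgroups of subdirect products and quotients are again quotients of subdirect products of subnormal subgroups of the factors. That lemma is the analogue of the compatibility property of the $\mu_k$ with restriction. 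Since $X$ has finitely many subnormal subgroups, $\sform(\form(X))=\form(X\times\prod_i S_i)$, where the $S_i$ run over the subnormal subgroups of $X$. Hence every $\mathfrak H$ is $\form(X\times Y)$ with $X$ and $Y$ one-generators of the two parts. Countability follows because there are only countably many finite groups up to isomorphism. Infinitude follows from the strict chain of Theorem~\ref{thm:main}.

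Finally, for $\C_k$ we apply the decomposition to $\mathfrak H=\C_k$. Since $L\in\C_k$ (the maps $\mu_k$ vanish on $L$, which has no outer actions), we get $\mathfrak R\subseteq\C_k$, and therefore $\C_k=\sform(\mathfrak E_k)\vee\mathfrak R$. We then check that $\sform(\mathfrak E_k)\subseteq\mathfrak E_k\vee\mathfrak R$. The reason is that subnormal subgroups of groups in $\mathfrak Q$ are built from $L$, $A_5$, central quotients and $C_2$-extensions, and the extra subnormal pieces lie in $\mathfrak R$. The reverse inclusion $\mathfrak R\subseteq\sform(\mathfrak E_k)$ holds because $L\sn A\in\mathfrak E_k$; this uses $A\in\C_k$, which should follow since $\mu_k$ vanishes on $A$ for $k\ge1$. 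These inclusions give the stated equalities.

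The main obstacle is the decomposition step. It requires showing that subnormal subgroups of epimorphic images of subdirect products decompose compatibly, and that nothing new arises from central products mixing $\mathfrak Q$-type and $\mathfrak R$-type components. We would first try to characterise membership in $\mathfrak H$ by the vanishing pattern of the $\mu_k$ together with the $\form(L)$-type invariants of the components, and then verify that this characterisation splits along the two parts.
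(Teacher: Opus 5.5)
Your proposal has a genuine gap at exactly the step you flag as the main obstacle, and the $\mu_k$ are the wrong tool for it. The $\mu_k$ only record central involutions of $L$-components weighted by outer-action characters. So they cannot describe an arbitrary $S_n$-closed $\X$: the distinct $S_n$-closed formations $\form(A_5)$, $\form(A_5\times C_2)$ and $\mathfrak R$ all lie in every $\C_k$. You also give no argument that the two ``parts'' you separate inside $H$ lie in $\mathfrak Q$ and $\mathfrak R$.

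The missing idea is to split the covering group, not $H$. Take $D\sd A^a\times L^b$. Then $D'$ is a direct product of diagonal copies of $L$ on disjoint supports, and no support meets both an $A$-coordinate and an $L$-coordinate. Indeed, a mixed block has normalizer $\Delta_I(L)Z^I$, which would force the projection of $D$ onto an $A$-coordinate of that block into $L$, contradicting subdirectness. Hence $D=P_L\times D_0$, where:
\begin{enumerate}
\item $P_L\iso L^s$ is the product of the pure $L$-blocks;
\item $D_0$, the set of elements with trivial reference coordinate in each pure $L$-block, is subdirect in copies of $A$ and $C_2$, so $D_0\in\mathfrak Q$.
\end{enumerate}
For $f\colon D\epi H$, the subgroups $f(P_L)\in\mathfrak R$ and $f(D_0)\in\mathfrak Q$ are commuting normal subgroups with product $H$. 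By $S_n$-closure both lie in $\X$, and $H$ is a quotient of their direct product. This gives $\X=(\X\cap\mathfrak Q)\vee(\X\cap\mathfrak R)$ with no $\sform$ on the $\mathfrak Q$-side.

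That stronger form is what you need for $\C_k=\mathfrak E_k\vee\mathfrak R$. Your heuristic for $\sform(\mathfrak E_k)\subseteq\mathfrak E_k\vee\mathfrak R$ (``the extra subnormal pieces lie in $\mathfrak R$'') never shows that the non-$\mathfrak R$ piece lies in $\mathfrak E_k$. With the strong formula and $\mathfrak R\subseteq\C_k$, it is one line:
\[
 \sform(\mathfrak E_k)\subseteq\C_k=(\C_k\cap\mathfrak Q)\vee(\C_k\cap\mathfrak R)=\mathfrak E_k\vee\mathfrak R .
\]

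Second, your claim $A\in\C_k$ is false. The only component of $A$ is $L$, with $c_L=z$ and $\lambda_L(t)=1$, so $\mu_k^A(t,\ldots,t)=z\neq1$. In fact $A\notin\C_1$, so $A\notin\mathfrak E_k$ for every $k$; excluding $A$ is the whole point of the construction. Therefore $L\sn A$ does not give $\mathfrak R\subseteq\sform(\mathfrak E_k)$. Use $H_{k+1}$ instead: it lies in $\mathfrak E_k$ by Lemma~\ref{lem:witness} and \eqref{eq:Hd-member}, and its components are normal and isomorphic to $L$.

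Two smaller points. First, one-generation of every subformation of $\mathfrak Q$ should be derived from the shape of Burichenko's lattice, not just asserted. Every proper subformation of $\mathfrak K_i$ lies in $\mathfrak K_{i+1}$ or $\mathfrak H_i$, and every proper subformation of $\mathfrak H_i$ lies in $\mathfrak H_{i+1}$; a direct product of witnesses then generates. Second, $\Sn\Qop\Rop\X\subseteq\Qop\Rop\Sn\X$ holds for arbitrary classes of finite groups. So $\sform(X)=\form(X\times\prod_iS_i)$ needs no structural lemma about $\F$.
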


Now we discuss the relation between our construction and Burichenko's
example. In \cite[pp.~15--16, Theorem~1]{Burichenko2013}, Burichenko
restates \cite[Theorem~2.5]{Burichenko2012}, and this theorem
explicitly covers both stem double covers of $S_5$. By
Lemma~\ref{lem:base}, our group
$A$ satisfies the conditions $|Z(A)|=2$, $A/Z(A)\iso S_5$ and
$Z(A)\leq A'$ stated in that theorem; hence $A$ is among the covers
treated there. Our ambient formation, however, is
$\sform(A)=\form(A\times L)$. In fact, by
Proposition~\ref{prop:ordinary-comparison}, the formation $\form(A)$
itself has only two non-empty $S_n$-closed subformations, and this is
the reason why we pass to its $S_n$-closure.

Note that the proof of Theorem~\ref{thm:main}, including the
strictness of the inclusions after passing to $S_n$-closures, does not
use Burichenko's classification. The same is true for the recovery
formula
\[
 \X=(\X\cap\mathfrak Q)\vee(\X\cap\mathfrak R)
\]
for every $S_n$-closed formation $\X\subseteq\F$ (see
Proposition~\ref{prop:recovery}). On the other hand, the assertion of
Theorem~\ref{thm:further} concerning all $S_n$-closed subformations of
$\F$ is based on Burichenko's classification in the form given in
\cite[Theorem~1]{Burichenko2013}. We do not present the covering group
$A$ or the general formula for $S_n$-closures as new. The point of our
approach is that the central-valued maps are compatible simultaneously
with epimorphic images, subdirect products and subnormal subgroups.
Note also that we do not determine the positions of the formations
$\mathfrak E_k$ in Burichenko's classification.

In Section~\ref{sec:prelim}, we fix notation and the generating group.
In Section~\ref{sec:structure}, we describe the derived subgroups of
the groups in $\F$. The maps $\mu_k^H$ and the formations $\C_k$ are
introduced in Section~\ref{sec:invariants}. In
Section~\ref{sec:proof}, for every $d\geq2$ we construct a group
$H_d\in\C_{d-1}\setminus\C_d$, and we prove Theorem~\ref{thm:main}. In
Section~\ref{sec:comparison}, we compare the ordinary formation
$\form(A)$ with its $S_n$-closure, and in Section~\ref{sec:further} we
prove the recovery formula and Theorem~\ref{thm:further}. Finally, in
Appendix~\ref{app:intersection} we determine the intersection of the
chain and give an explicit generator of it, and in
Appendix~\ref{app:boundary} we show that the hypothesis of
$S_n$-closedness in the recovery formula cannot be omitted.

\section{Preliminaries}\label{sec:prelim}

If $\X$ is a non-empty class of groups, then $\Qop\X$, $\Rop\X$ and
$\Sn\X$ denote, respectively, the classes of all epimorphic images, all
finite subdirect products and all subnormal subgroups of groups in
$\X$. We write $\sform(\X)$ to denote the smallest $S_n$-closed
formation containing $\X$. We write $D\sd X_1\times\cdots\times X_s$
if $D$ is a subgroup of $X_1\times\cdots\times X_s$ whose projection
onto every factor is surjective, that is, $D$ is a subdirect product of
$X_1,\ldots,X_s$. As usual, $Z(G)$ and $G'$ denote the centre and the derived
subgroup of $G$, respectively.

\begin{lemma}\label{lem:closure}
For every non-empty class $\X$,
\begin{equation}\label{eq:closure}
 \form(\X)=\Qop\Rop\X,
 \qquad
 \sform(\X)=\form(\Sn\X)=\Qop\Rop\Sn\X.
\end{equation}
If $G$ is finite and $T$ is the direct product of all subnormal
subgroups of $G$, then $\sform(G)=\form(T)$.
\end{lemma}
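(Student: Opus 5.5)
The plan is to prove the three assertions in order, using only the definitions and standard facts about the operators $\Qop$, $\Rop$, $\Sn$.

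\emph{The formula $\form(\X)=\Qop\Rop\X$.} It suffices to show that $\Qop\Rop\X$ is closed under $\Qop$ and $\Rop$: it contains $\X$ and lies in every formation containing $\X$. Closure under $\Qop$ is immediate. For $\Rop$, let $G_i=D_i/N_i$ with $D_i\sd\prod_j X_{ij}$ and $X_{ij}\in\X$, and let $G\sd G_1\times\cdots\times G_s$. Put $D=\prod_i D_i$ and $N=\prod_i N_i$. The full preimage $\widetilde G\leq D$ of $G$ under $D\epi\prod_i G_i$ is subdirect in $\prod_{i,j}X_{ij}$: it contains $N$, and $G$ projects onto each $G_i$, so its projection to $D_i$ is $D_i$, which already maps onto every $X_{ij}$. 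Since $G\iso\widetilde G/N$, we get $G\in\Qop\Rop\X$.

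\emph{The formula $\sform(\X)=\form(\Sn\X)$.} Here the key step is that $\form(\Sn\X)=\Qop\Rop\Sn\X$ is $S_n$-closed; once this holds, minimality is clear. I would prove two commutation rules.
\begin{enumerate}
\item $\Sn\Qop\subseteq\Qop\Sn$: if $H/N\sn G/N$, then the full preimage $H$ is subnormal in $G$.
\item $\Sn\Rop\subseteq\Rop\Sn$: if $H\sn D\sd\prod X_i$, then the projection $\pi_i(H)$ is subnormal in $\pi_i(D)=X_i$, because subnormal chains map to subnormal chains under epimorphisms. Hence $H\sd\prod\pi_i(H)$ with $\pi_i(H)\in\Sn\X$.
\end{enumerate}
Together with $\Sn\Sn=\Sn$ these give $\Sn\Qop\Rop\Sn\X\subseteq\Qop\Rop\Sn\X$. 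This step carries the only real content, and the point to be careful about is item (ii), namely that subdirectness passes to the projections of $H$.

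\emph{The statement about $T$.} Let $T=\prod_{H\sn G}H$, taken as an external direct product. Every factor is a subnormal subgroup of $G$, so $T\in\Rop\Sn\{G\}\subseteq\sform(G)$, which gives $\form(T)\subseteq\sform(G)$. Conversely, each $H\sn G$ is an epimorphic image of $T$ under the projection onto its factor. Thus $\Sn\{G\}\subseteq\Qop\{T\}\subseteq\form(T)$, and the previous part gives $\sform(G)=\form(\Sn\{G\})\subseteq\form(T)$.
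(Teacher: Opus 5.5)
Your proposal is correct and follows the paper's proof essentially step for step. You use the same full-preimage argument for closure of $\Qop\Rop\X$ under subdirect products; the only difference is that you merge the paper's two steps $\Rop\Qop\X\subseteq\Qop\Rop\X$ and $\Rop\Rop\X=\Rop\X$ into a single construction. You also use the same commutation rules $\Sn\Qop\X\subseteq\Qop\Sn\X$ and $\Sn\Rop\X\subseteq\Rop\Sn\X$, and the same two inclusions for $T$.
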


\begin{proof}
The first equality is well known (see also \cite[p.~348]{BBH}), and
the second one also follows from \cite[Lemma~8, p.~535]{Vedernikov}.
For the reader's convenience, we give the closure arguments. If
$D\sd\prod_{i=1}^s(X_i/N_i)$, then the full inverse image of $D$ in
$\prod_{i=1}^sX_i$ is subdirect and maps onto $D$. Hence
$\Rop\Qop\X\subseteq\Qop\Rop\X$. Moreover, $\Rop\Rop\X=\Rop\X$, since
iterated subdirect products may be flattened. Therefore
\[
 \Rop\Qop\Rop\X\subseteq\Qop\Rop\Rop\X=\Qop\Rop\X,
 \qquad
 \Qop\Qop\Rop\X=\Qop\Rop\X,
\]
so $\Qop\Rop\X$ is a formation.

Next we show that
\[
 \Sn\Qop\X\subseteq\Qop\Sn\X,
 \qquad
 \Sn\Rop\X\subseteq\Rop\Sn\X.
\]
The first inclusion follows by taking the full inverse image of a
subnormal subgroup under an epimorphism. For the second, project a
subnormal subgroup onto each coordinate: its images are subnormal in
the coordinate groups, and it is subdirect in their product. Since
$\Sn\Sn\X=\Sn\X$, we get
\[
 \begin{aligned}
 \Sn\Qop\Rop\Sn\X
 &\subseteq\Qop\Sn\Rop\Sn\X\\
 &\subseteq\Qop\Rop\Sn\Sn\X
 =\Qop\Rop\Sn\X.
 \end{aligned}
\]
Thus $\Qop\Rop\Sn\X$ is an $S_n$-closed formation containing $\X$, and
it is clearly contained in every $S_n$-closed formation containing
$\X$. This proves the second equality in \eqref{eq:closure}.

Finally, $G$ has only finitely many subnormal subgroups. Each of them
is a quotient of $T$, and $T$, being their direct product, belongs to
the formation they generate. Hence
$\form(T)=\form(\Sn\{G\})=\sform(G)$. The lemma is proved.
\end{proof}

Recall that a group $E$ is said to be \emph{quasisimple} if $E=E'$ and
$E/Z(E)$ is a non-abelian simple group. A \emph{component} of $G$ is a
subnormal quasisimple subgroup of $G$. We write $\Comp(G)$ to denote
the set of all components of $G$. Distinct components commute (see,
for example, \cite[Corollary~3.26(i)]{Craven}). Moreover, if $U$ is a
subnormal subgroup of $G$, then every component of $G$ either is a
component of $U$ or centralizes $U$ (see
\cite[Lemmas~3.24--3.25]{Craven}). We shall also use the following
elementary fact: every proper normal subgroup of a quasisimple group
$E$ is contained in $Z(E)$. Indeed, let $N\normal E$. Then
$NZ(E)/Z(E)$ is normal in the simple group $E/Z(E)$. If
$N\nleq Z(E)$, then $NZ(E)=E$, and so $E=E'=(NZ(E))'=N'\leq N$ since
$E$ is perfect. Thus every proper normal subgroup of $E$ lies in
$Z(E)$.

Let
\begin{equation}\label{eq:L}
 L=\SL_2(\FF_5),\qquad Z=Z(L)=\langle z\rangle,\qquad z=-I.
\end{equation}
It is well known that $|L|=120$, $L=L'$, $|Z|=2$ and $L/Z\iso A_5$.
The perfectness of $L$ can also be checked directly: $L$ is generated
by the upper and lower unitriangular matrices, and conjugation by
$\operatorname{diag}(2,3)$ shows that each of these elementary matrices
is a commutator.

\begin{lemma}\label{lem:base}
The natural homomorphism $\Aut(L)\maps\Aut(L/Z)$ is an isomorphism, and
\[
 \Aut(L)\iso S_5,\qquad \Out(L)\iso C_2.
\]
Conjugation by
\[
 M=\begin{pmatrix}0&1\\2&0\end{pmatrix}\in\GL_2(\FF_5)
\]
induces an outer involution $\theta$ of $L$. Set
$A=L\rtimes_\theta\langle t\rangle$, where $t^2=1$. Then
\begin{equation}\label{eq:A}
 |A|=240,\qquad A'=L,\qquad C_A(L)=Z(A)=Z,
 \qquad A/Z\iso\Aut(L)\iso S_5.
\end{equation}
Every automorphism of $L$ extends to an automorphism of $A$ stabilizing $L$.
\end{lemma}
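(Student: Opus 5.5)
We must prove Lemma~\ref{lem:base}: $\Aut(L)\to\Aut(L/Z)$ is an isomorphism, $\Aut(L)\iso S_5$, $\Out(L)\iso C_2$, conjugation by $M$ is outer, the structural facts \eqref{eq:A} about $A$, and that every automorphism of $L$ extends to $A$.

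\emph{The map $\Aut(L)\to\Aut(L/Z)$.} Since $Z=Z(L)$ is characteristic, the map is well defined. For injectivity, let $\alpha$ induce the identity on $L/Z$. Then $x\mapsto x^{-1}\alpha(x)$ is a homomorphism $L\to Z$, because $Z$ is central. As $L$ is perfect and $Z$ is abelian, this homomorphism is trivial, so $\alpha=1$.

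For surjectivity, I use that $\Aut(A_5)\iso S_5$ and that $L/Z$ has index $2$ in it, so it suffices to produce one automorphism of $L$ inducing an outer automorphism of $L/Z\iso A_5\iso\PGL_2(\FF_5)'$. The matrix $M$ has determinant $-2=3$, a non-square in $\FF_5$ (the squares are $1,4$). Hence the image of $M$ lies in $\PGL_2(\FF_5)\setminus\mathrm{PSL}_2(\FF_5)$. Since $C_{\PGL_2(5)}(\mathrm{PSL}_2(5))=1$, conjugation by $M$ induces an automorphism of $L/Z$ not in $\Inn(L/Z)$.

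Conjugation by $M$ preserves $\SL_2$, so it defines $\theta\in\Aut(L)$. Moreover $\theta$ is not inner in $L$: otherwise its image would be inner on $L/Z$. Therefore the image of $\Aut(L)$ contains $\Inn(L/Z)$ together with an outer element, and so it is all of $\Aut(L/Z)\iso S_5$. This gives $\Aut(L)\iso S_5$. Also $\Inn(L)\iso L/Z\iso A_5$, so $\Out(L)\iso C_2$. Finally, $M^2=2I$ is scalar, so $\theta^2=1$ and $\theta$ is an outer involution.

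\emph{The facts \eqref{eq:A}.} Clearly $|A|=240$. Since $A/L\iso C_2$ is abelian, $A'\le L$; and $L=L'\le A'$, so $A'=L$.

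For the centralizer, every element of $A$ has the form $x$ or $xt$ with $x\in L$. An element $xt$ acts on $L$ as $\iota_x\theta$, which is outer, so $xt\notin C_A(L)$. An element $x\in L$ centralizes $L$ exactly when $x\in Z$. Hence $C_A(L)=Z$. Since $t$ commutes with $z=-I$ (conjugation by $M$ fixes scalars), we have $Z\le Z(A)$. Conversely $Z(A)\le C_A(L)=Z$, so $Z(A)=Z$.

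The conjugation map $A\to\Aut(L)$ therefore has kernel $C_A(L)=Z$, and its image has order $240/2=120=|\Aut(L)|$. Thus $A/Z\iso\Aut(L)\iso S_5$.

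\emph{Extension of automorphisms.} Let $\alpha\in\Aut(L)$. Since $\Inn(L)$ is normal in $\Aut(L)$ of index $2$, we have $\alpha\theta\alpha^{-1}=\iota_c\theta$ for some $c\in L$. Define $\hat\alpha$ on $A$ by $\hat\alpha(x)=\alpha(x)$ for $x\in L$ and $\hat\alpha(t)=ct$. Two relations must hold for $\hat\alpha$ to be a homomorphism.

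First, the conjugation relation: $(ct)\,\alpha(x)\,(ct)^{-1}=\iota_c\theta(\alpha(x))=\alpha(\theta(x))$, which matches $\hat\alpha(txt^{-1})=\alpha(\theta(x))$.

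Second, the relation $(ct)^2=1$, which is the only genuinely delicate step. We have $(ct)^2=c\,\theta(c)$. From $(\iota_c\theta)^2=\alpha\theta^2\alpha^{-1}=1$ we get $\iota_{c\theta(c)}=1$, so $c\theta(c)\in Z$. The element $c$ is determined only up to $Z$, and replacing $c$ by $cz$ changes $c\theta(c)$ by $z\theta(z)=z^2=1$, so this ambiguity does not help. Instead, I will use the freedom in choosing $\alpha$ within its coset modulo $\Aut$'s action.

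Since $A/Z\iso S_5\iso\Aut(L)$ and $\Inn(A)$ already induces all of $\Aut(L)$, every $\alpha$ equals $\iota_a|_L$ for some $a\in A$. Then conjugation by $a$ is an automorphism of $A$ stabilizing $L$ and extending $\alpha$. This argument avoids the involution check entirely, so I will use it as the actual proof.

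Finally, $\hat\alpha$ is bijective because it is surjective on the finite group $A$.
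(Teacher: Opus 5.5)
Your proof is correct and follows essentially the paper's route: injectivity from perfectness of $L$; surjectivity and outerness of $\theta$ from the non-square determinant of $M$ plus triviality of the relevant centralizer; the facts about $A$ from the action homomorphism $A\to\Aut(L)$; and the extension property via conjugation by elements of $A$. The only differences are minor: you cite $C_{\PGL_2(5)}(\mathrm{PSL}_2(5))=1$ and use an index-$2$ argument where the paper checks directly that $C_{\GL_2(5)}(L)$ is scalar and counts $|\PGL_2(5)|=120$; and the abandoned explicit construction of $\hat\alpha$, together with the closing remark on its bijectivity, can simply be deleted.
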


\begin{proof}
Suppose that an automorphism of $L$ induces the identity on $L/Z$. Then
it has the form $x\mapsto x\varepsilon(x)$ for some homomorphism
$\varepsilon:L\maps Z$. Since $L$ is perfect, $\varepsilon=1$. Hence
the natural map of automorphism groups is injective. A matrix
commuting with both
$\left(\begin{smallmatrix}1&1\\0&1\end{smallmatrix}\right)$ and
$\left(\begin{smallmatrix}1&0\\1&1\end{smallmatrix}\right)$ is scalar.
These matrices belong to $L$, so $C_{\GL_2(5)}(L)$ consists exactly of
the scalar matrices. Therefore conjugation by the elements of
$\GL_2(5)$ induces a subgroup of $\Aut(L)$ of order
$|\PGL_2(5)|=120$. Since $\Aut(A_5)\iso S_5$, it follows that the
natural map is an isomorphism.

Now $M^2=2I$, so $\theta^2=1$. Moreover, $\det M=3$ is a non-square in
$\FF_5$. If conjugation by $M$ coincided on $L$ with conjugation by
some $x\in L$, then $x^{-1}M$ would centralize $L$ and hence would be
scalar; but then $\det M$ would be a square. Thus $\theta$ is an outer
automorphism of order $2$.

The image of the action homomorphism $A\maps\Aut(L)$ contains
$\Inn(L)$ and $\theta$, and so this homomorphism is surjective. Its
kernel has order $2$ and contains $Z$, so it coincides with $Z$. Since
$Z$ is central in $A$, we get the assertions on the centralizer and the
centre. Since $A/L$ is abelian and $L$ is perfect, $A'=L$. Finally,
every automorphism of $L$ is induced by conjugation by some element of
$A$, and conjugation by this element on $A$ is the required extension.
The lemma is proved.
\end{proof}

From now on, we fix
\begin{equation}\label{eq:ambient}
 G_0=A\times L,\qquad |G_0|=28800,\qquad
 \F=\form(G_0)=\form(\{A,L\}).
\end{equation}
The last equality holds since a formation contains $A\times L$ if and
only if it contains both $A$ and $L$.

\begin{lemma}\label{lem:ambient-closed}
The subnormal subgroups of $A$ are $1$, $Z$, $L$ and $A$.
Moreover, $\F=\sform(A)$; in particular, $\F$ is $S_n$-closed.
\end{lemma}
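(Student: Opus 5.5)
To find the subnormal subgroups of $A$, I will first determine its normal subgroups and then go one step further down. By Lemma~\ref{lem:base}, $A/Z\iso S_5$. The normal subgroups of $S_5$ are $1$, $A_5$ and $S_5$, so every normal subgroup $N$ of $A$ satisfies $NZ\in\{Z,L,A\}$.

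If $NZ=Z$, then $N\in\{1,Z\}$.

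If $NZ=L$, then $N$ is normal in $L$ and is not contained in $Z$. By the elementary fact about quasisimple groups recalled above, every proper normal subgroup of $L$ lies in $Z$, so $N=L$.

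If $NZ=A$, then $N\ge (NZ)'=A'=L$. Since $Z\le L\le N$, we get $N=NZ=A$.

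So the normal subgroups of $A$ are exactly $1$, $Z$, $L$ and $A$. Now a subnormal subgroup $H$ of $A$ is either $A$ itself or subnormal in some maximal normal subgroup along a subnormal series. Here the only proper normal subgroup above $Z$ is $L$, and $Z$ is subnormal with quotient of order $2$. So it suffices to find the subnormal subgroups of $L$. I argue by induction along a subnormal chain ending in $L$, using the same quasisimple fact inside $L$:
\begin{enumerate}
\item a normal subgroup of $L$ is $1$, $Z$ or $L$;
\item normal subgroups of $Z$ are $1$ and $Z$.
\end{enumerate}
Hence every subnormal subgroup of $A$ is one of $1$, $Z$, $L$, $A$. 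Conversely, all four are normal in $A$, hence subnormal.

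For the second claim, Lemma~\ref{lem:closure} gives $\sform(A)=\form(T)$, where $T=1\times Z\times L\times A$. A formation contains $T$ if and only if it contains each factor. It contains $Z\iso C_2$ whenever it contains $A$, since $A/L\iso C_2$; likewise $1$ is a quotient of $A$. Therefore $\form(T)=\form(\{A,L\})=\F$ by \eqref{eq:ambient}. It follows that $\F=\sform(A)$, which is $S_n$-closed by definition.

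The only step that needs care is the case $NZ=A$, where one must show that $N$ contains $L$. This follows from the identity $(NZ)'=N'$, valid because $Z$ is central, together with $A'=L$ from \eqref{eq:A}.
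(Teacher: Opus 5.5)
Your proof is correct and is essentially the paper's argument: you reduce modulo $Z$ to $A/Z\iso S_5$, use perfectness (via $(NZ)'=N'$, $L=L'$ and $A'=L$, or equivalently the quasisimple fact), and then apply Lemma~\ref{lem:closure} together with $Z\iso A/L$. The only difference is organizational: you first list the normal subgroups of $A$ and then induct down a subnormal chain, whereas the paper sends a subnormal subgroup directly to a subnormal subgroup of $S_5$. (The phrase ``$Z$ is subnormal with quotient of order $2$'' is a slip, since the quotient of order $2$ is $A/L$, but it plays no role in your argument.)
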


\begin{proof}
Let $U\sn A$. Then the image of $U$ in $A/Z\iso S_5$ is subnormal, so
it is $1$, $A_5$ or $S_5$. If this image is trivial, then $U\leq Z$.
If it is $A_5$, then $UZ=L$ and $L=L'=(UZ)'=U'\leq U$, so $U=L$.
If it is $S_5$, then $UZ=A$ and $L=A'=(UZ)'=U'\leq U$; hence $U=A$,
since $Z\leq L$. Clearly, all four subgroups listed are subnormal
in $A$.

Since $A/L\iso C_2$, the group $Z\iso C_2$ is a quotient of $A$. Hence,
by Lemma~\ref{lem:closure},
\[
 \sform(A)=\form(\{1,Z,L,A\})=\form(\{A,L\})=\F.
\]
The lemma is proved.
\end{proof}

\section{The derived subgroups of the groups in \texorpdfstring{$\F$}{F}}\label{sec:structure}

For a subset $I\subseteq\{1,\ldots,n\}$ and automorphisms
$\alpha_i\in\Aut(L)$, the subgroup
\[
 \Delta_I(L)=\{(\alpha_i(x))_{i\in I}:x\in L\}\leq L^I
\]
is called a \emph{full twisted diagonal subgroup}. In a larger direct
product, we regard it as having identity coordinates outside $I$, and
we call $I$ its \emph{support}.

The following lemma is a special case of the covering-group arguments
in \cite[Lemma~4 and Corollary~1]{Vedernikov}. We give a proof, since
we need an explicit description of the diagonal blocks.

\begin{lemma}\label{lem:subdirect-L}
Let $D\sd L^n$, where $n\geq1$. There is a partition
$\{1,\ldots,n\}=I_1\sqcup\cdots\sqcup I_r$ and full twisted diagonal
subgroups $P_j\leq L^{I_j}$ such that
\[
 D'=P_1\times\cdots\times P_r,\qquad P_j\iso L.
\]
In particular, $D'$ is perfect.
\end{lemma}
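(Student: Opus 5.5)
Pass to the quotient by the centre and use Goursat's lemma there, then lift back. Let $\pi:L^n\maps (L/Z)^n\iso A_5^n$ be the natural map. Since $D\sd L^n$, its image $\bar D=\pi(D)$ is a subdirect product of copies of the simple group $A_5$. By the standard induction via Goursat's lemma, there is a partition $\{1,\ldots,n\}=I_1\sqcup\cdots\sqcup I_r$ with
\[
 \bar D=\bar P_1\times\cdots\times\bar P_r ,
\]
where each $\bar P_j$ is a full twisted diagonal subgroup of $A_5^{I_j}$. Concretely, pick $j_0\in I_j$; for each $i\in I_j$ the coordinate is $\bar\alpha_i$ applied to the $j_0$-coordinate, with $\bar\alpha_i\in\Aut(A_5)$. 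The induction runs on $n$: project to the first $n-1$ coordinates, and note that the kernel of the last projection restricted to $\bar D$ is normal in $A_5$, so it is either trivial (diagonal gluing) or everything (direct factor). By Lemma~\ref{lem:base}, each $\bar\alpha_i$ lifts uniquely to some $\alpha_i\in\Aut(L)$. Define $P_j=\{(\alpha_i(x))_{i\in I_j}:x\in L\}$; then $P_j\iso L$ is perfect.

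Next we show $D'\supseteq P_1\times\cdots\times P_r$. Let $Q_j$ be the image of $P_j$ in $A_5^{I_j}$, viewed with trivial coordinates elsewhere; thus $Q_j=\bar P_j\le\bar D$. The key point is that $D\,Z^n=\pi^{-1}(\bar D)$. Hence $(DZ^n)'=D'$, because $Z^n$ is central. Also $P_j\le \pi^{-1}(\bar P_j)\le DZ^n$. Since $P_j$ is perfect, $P_j=P_j'\le (DZ^n)'=D'$. This gives $P:=P_1\times\cdots\times P_r\le D'$.

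For the reverse inclusion, note $D\le DZ^n=\pi^{-1}(\bar D)$, and $\pi^{-1}(\bar D)=P\cdot Z^n$, since $\pi(P)=\bar D$ and $\ker\pi=Z^n$. Therefore $D'\le (PZ^n)'=P'=P$. So $D'=P$, which is perfect as a direct product of perfect groups.

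The main obstacle is the Goursat/diagonal decomposition of $\bar D$, and specifically its twisted form: the identification between blocks is given by automorphisms of $A_5$. I would handle this by the induction sketched above. The other delicate point is making sure that the lift of $\bar P_j$ is exactly $P_j$ (up to the centre); this is where the isomorphism $\Aut(L)\iso\Aut(L/Z)$ of Lemma~\ref{lem:base} is used. The trick of working inside $DZ^n$ avoids having to know which central elements lie in $D$.
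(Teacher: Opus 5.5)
Your proposal is correct and follows essentially the same route as the paper: you decompose $\overline D\le A_5^n$ into full twisted diagonal blocks, lift the twists through the isomorphism $\Aut(L)\to\Aut(L/Z)$ of Lemma~\ref{lem:base}, and use that the full preimage $DZ^n$ equals $(P_1\times\cdots\times P_r)Z^n$ with $Z^n$ central, so $D'=(DZ^n)'=P_1\times\cdots\times P_r$ (the paper writes this preimage as $W=\prod_j\Delta_{I_j}(L)Z^{I_j}$). Your Goursat induction leaves implicit that in the gluing case the new coordinate is identified with exactly one existing block, which holds because the normal subgroups of $A_5^r$ are products of some of its simple factors; this is the same standard fact the paper quotes from Bryant and Foy. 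There is also a small wording slip: the subgroup that is normal in $A_5$ is $\overline D\cap(1\times A_5)$, the kernel of the projection onto the first $n-1$ coordinates, not the kernel of the last projection.
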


\begin{proof}
Let $\overline D$ be the image of $D$ in $L^n/Z^n\iso A_5^n$. A
subdirect subgroup of a direct power of a non-abelian simple group is a
direct product of full twisted diagonal subgroups with disjoint
supports (see \cite[Lemma~2.1]{BryantFoy1995}). For completeness, we
note that this follows by induction from Goursat's lemma. Suppose that
the projection onto the first $n-1$ coordinates has $r$ simple direct
factors. Then the last simple factor is either independent of them, or
it is identified with one of these factors through a simple quotient.
No other case occurs, since a normal subgroup of a direct product of
non-abelian simple groups is a product of some of the direct factors
\cite[Lemma~2.2]{BryantFoy1995}.

By Lemma~\ref{lem:base}, every automorphism of $A_5$ lifts to $L$.
Hence the full inverse image $W$ of $\overline D$ in $L^n$ is a direct
product of block groups $W_j$, where
\[
 W_j=\Delta_{I_j}(L)Z^{I_j}\iso L\times Z^{|I_j|-1}.
\]
For example, after untwisting, the elements of a block of size $s$
have the form $(x,xz_2,\ldots,xz_s)$ with $x\in L$ and $z_i\in Z$.
Thus $W'=P_1\times\cdots\times P_r$ for the corresponding diagonal
subgroups $P_j$. On the other hand, $W=DZ^n$ and $Z^n$ is central.
Therefore $D'=W'$. The lemma is proved.
\end{proof}

\begin{lemma}\label{lem:subdirect-mixed}
Let $D\sd A^a\times L^b$, where $a,b\geq0$, and put $P=D''$.
Then $D'=D''=P$, and $P$ is a direct product of full twisted diagonal
copies of $L$ on pairwise disjoint coordinate supports. Each such copy
is normal in $D$, and no support meets both an $A$-coordinate and an
$L$-coordinate. Furthermore, $D/D'$ is an elementary abelian $2$-group.
\end{lemma}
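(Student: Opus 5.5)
We reduce to Lemma~\ref{lem:subdirect-L} by intersecting $D$ with the subgroup $L^{a+b}$. Put $K=D\cap L^{a+b}$. Since $A^a\times L^b$ modulo $L^{a+b}$ is $C_2^a$, the quotient $D/K$ is an elementary abelian $2$-group, and so $D'\le K$.

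Next we show that $K$ is subdirect in $L^{a+b}$. For each coordinate $i$, the projection $\pi_i(K)$ is normal in $\pi_i(D)$, which is $A$ or $L$. Its index divides $2$, because $K$ has index dividing $2^a$ in $D$ and hence $|\pi_i(D):\pi_i(K)|$ is a power of $2$. Moreover $\pi_i(K)\le L$. The only normal subgroups of $A$ or $L$ contained in $L$ of $2$-power index are $L$ and $Z$ (use Lemma~\ref{lem:ambient-closed}, or the fact that proper normal subgroups of $L$ lie in $Z$). The index of $Z$ is not a power of $2$, so $\pi_i(K)=L$.

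Applying Lemma~\ref{lem:subdirect-L} to $K$ gives $K'=P_1\times\cdots\times P_r$, where each $P_j$ is a full twisted diagonal copy of $L$ on a block $I_j$. In particular $K'$ is perfect. Now $D'\le K$ gives $D''\le K'$. For the reverse inclusion, $K'\le D'$ and $K'$ is perfect, so $K'=K''\le D''$. Hence $D''=K'$.

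To prove $D'=D''$, look at $D/K'$. Its subgroup $K/K'$ is abelian, and $D/K$ is abelian, so $D/K'$ is metabelian. We want it to be abelian.

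The main point is the following. Since $K=D\cap L^{a+b}$, it contains every element of $D$ lying in $L^{a+b}$. By the proof of Lemma~\ref{lem:subdirect-L}, $KZ^{a+b}=W$ is the product of the blocks $\Delta_{I_j}(L)Z^{I_j}$. Hence $K/K'$ is a $2$-group that embeds in $Z^{a+b}$ modulo $\prod P_j$, and it is central in $L^{a+b}$ modulo $K'$.

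\begin{enumerate}
\item $[D,K]\le K'$. Take $g\in D$ and $k\in K$, and write $k=pc$ with $p\in\prod P_j$ and $c\in Z^{a+b}\cap K$. The element $c$ is central in $A^a\times L^b$, so $[g,c]=1$. Also $D$ normalises $K'$, so $[g,k]\in K'$.
\item $D'\le K'$. First, $D$ permutes the blocks trivially. Each $P_j$ is characteristic in $K$, and $\pi_i(D)$ normalises $L$ coordinatewise, so $D$ normalises each $P_j$; this also gives the normality claim in the statement. Then $D=\langle K, g_1,\dots\rangle$, and modulo $K'$ the generators $g$ act on $P_j$ by automorphisms. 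I would then argue that $[g,h]$ for $g,h\in D$ lies in $L^{a+b}\cap D=K$, and lies in $K'$ modulo the central part $Z^{a+b}\cap K$. The obstacle is exactly this last step: excluding a nontrivial central component of $[g,h]$ in $Z^{a+b}$. I would handle it by computing inside $A$, where $A'=L$ and $Z\le L=L'$, so central elements appear as commutators within the diagonal $P_j$. Concretely, $[g,h]$ projects onto each block $I_j$ as an element of $(\Delta_{I_j}(L)Z^{I_j})$, and I expect to show it lies in $\Delta_{I_j}(L)$ by comparing coordinates via the common twisting automorphisms.
\end{enumerate}

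For the mixed-support claim, suppose a block $I_j$ contains an $A$-coordinate $i$ and an $L$-coordinate $l$. On $P_j$ the coordinates are related by $x_i=\beta(x_l)$ for a fixed $\beta$. Choose $g\in D$ whose $i$-th coordinate lies outside $L$; its $l$-th coordinate is then some $y\in L$. Conjugating $P_j$ by $g$ must preserve $P_j$. On coordinate $i$ this acts by the outer automorphism, while on coordinate $l$ it acts by the inner automorphism induced by $y$. This forces $\theta$ times an inner automorphism to be inner, contradicting Lemma~\ref{lem:base}.

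Finally, $D/D'=D/K'$ is an extension of the elementary abelian $2$-group $K/K'\le(Z^{a+b})$-image by $D/K\le C_2^a$, and it is abelian by the previous step. It remains to check exponent $2$. For $g\in D$ we have $g^2\in K$. I expect $g^2\in K'Z$-part with trivial central component, by a similar coordinatewise argument using the fact that the squares of elements of $A\setminus L$ lie in $L$.
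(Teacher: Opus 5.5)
\textbf{There is a genuine gap at the core of the proof.} Your reduction to $K=D\cap L^{a+b}$ is sound as far as it goes. You correctly show $K\sd L^{a+b}$, that $D''=K'$ is a product of full twisted diagonal blocks by Lemma~\ref{lem:subdirect-L}, and that $[D,K]\le K'$. Your argument excluding mixed supports is also correct.

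There are two small slips. First, $P_j$ is not characteristic in $K$ in general: the swap automorphism of $L\times L$ moves the blocks. Normality should instead come from $P_j=K'\cap G_{I_j}$, where $K'=D''\normal D$ and coordinatewise conjugation preserves the subgroup $G_{I_j}$ of elements supported on $I_j$. Second, among the normal subgroups $1,Z,L$ only $L$ has $2$-power index; $Z$ does not.

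The real problem is that the two substantive assertions of the lemma, $D'=D''$ and the exponent-$2$ property of $D/D'$, are left as ``I expect to show''. The mechanism you suggest does not address the issue. You propose that central elements arise as commutators inside $P_j$ because $Z\le L=L'$. But the question is whether $[g,h]$ (or $g^2$) can have a component in $Z^{I_j}$ lying outside $\Delta_{I_j}(L)$, and nothing in your argument rules that out.

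The missing ingredient is an explicit computation of the normalizer of a block. After untwisting, which is possible by the last assertion of Lemma~\ref{lem:base}, an element $g=(g_m)_{m\in I}$ normalizing $\Delta_I(L)$ must have all its coordinates inducing the same automorphism of $L$. Since the centralizer of $L$ in each coordinate group is $Z$, this gives $g\in\Delta_I(A)Z^I$, or $g\in\Delta_I(L)Z^I$ if $I$ contains an $L$-coordinate.

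Thus on each block $g=(az_1,\dots,az_s)$ and $h=(bw_1,\dots,bw_s)$, where the $z_m,w_m\in Z$ are central of order $2$. Hence $[g,h]=([a,b],\dots,[a,b])$ and $g^2=(a^2,\dots,a^2)$ both lie in $\Delta_I(L)$, because $A/L\iso C_2$. Since $D$ normalizes every block, $D\le N_G(P)=\prod_jN_{G_{I_j}}(P_j)$, and $N_G(P)/P$ is elementary abelian. This gives $D'\le P$ and the exponent statement at once.

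This is the paper's route. It also avoids your detour through $K$: since $A'=L$, the projections of $D'$ onto the coordinates are already $L$, so Lemma~\ref{lem:subdirect-L} applies to $D'$ directly.
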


\begin{proof}
The assertion is trivial if $a+b=0$, so we may suppose that $a+b>0$.
Since $A'=L$, the projection of $D'$ onto every coordinate is $L$.
Hence, by Lemma~\ref{lem:subdirect-L} applied to $D'$,
\[
 P=D''=P_1\times\cdots\times P_r,
\]
where the supports $I_j$ of the diagonal copies $P_j\iso L$ form a
partition of the set of coordinates.

\emph{(1) Every $P_j$ is normal in $D$.}
The subgroup $P$ is characteristic in $D$, and conjugation in the
ambient direct product preserves every coordinate support. Since $P_j$
is the intersection of $P$ with the subgroup of all elements supported
on $I_j$, it follows that $D$ normalizes $P_j$.

\emph{(2) The quotient of the normalizer of a diagonal block by this
block is elementary abelian.}
The coordinate twists can be removed by automorphisms of the coordinate
groups: for $L$-coordinates this is clear, and for $A$-coordinates it
follows from the last assertion of Lemma~\ref{lem:base}. Therefore we
may calculate with an untwisted diagonal copy of $L$ on a support $I$
of size $s$.

If all coordinates in $I$ are of type $A$, then an element normalizes
the diagonal if and only if its coordinates induce the same
automorphism of $L$. Since the kernel of $A\epi\Aut(L)$ is $Z$, this
gives
\begin{equation}\label{eq:normalizer-A}
 N_{A^I}(\Delta_I(L))=\Delta_I(A)Z^I.
\end{equation}
Its quotient by $\Delta_I(L)$ is elementary abelian, since it is
generated by the image of the diagonal involution $t$ and by images of
central involutions.

Now suppose that $I$ contains an $L$-coordinate. Then the common
induced automorphism must be inner. Choose $x\in L$ inducing it. Then
every coordinate differs from $x$ by an element of the centralizer of
$L$ in the coordinate group, and this centralizer is $Z$ for both
types of coordinates. Thus, in this case, the normalizer of the block
is
\begin{equation}\label{eq:normalizer-mixed}
 \Delta_I(L)Z^I,
\end{equation}
and its quotient by $\Delta_I(L)$ is again elementary abelian.
Untwisting back, we obtain the same assertions for the original blocks.

\emph{(3) No support is mixed, and $D'=P$.}
If a support contained coordinates of both types, then
\eqref{eq:normalizer-mixed} would force the projection of $D$ onto an
$A$-coordinate in this support to lie in $L<A$, contrary to the
subdirectness of $D$. Let $G=A^a\times L^b$, and let $G_{I_j}$ be the
subgroup of $G$ consisting of all elements supported on $I_j$.
Coordinatewise conjugation preserves every $G_{I_j}$. Since
$P_j=P\cap G_{I_j}$, every element normalizing $P$ normalizes each
$P_j$; in particular, no permutation of the supports occurs.
Conversely, the product of the block normalizers normalizes $P$.
Therefore
\[
 N_G(P)=\prod_{j=1}^r N_{G_{I_j}}(P_j),
\]
and the quotient of this normalizer by $P$ is elementary abelian. Since
$P\leq D\leq N_G(P)$, it follows that $D/P$ is elementary abelian, and
so $D'\leq P=D''\leq D'$. Hence $D'=P$. The lemma is proved.
\end{proof}

\Needspace{9\baselineskip}
\begin{proposition}\label{prop:structure}
If $H\in\F$, then the following statements hold.
\begin{enumerate}
\item $H/H'$ is an elementary abelian $2$-group.
\item $H'$ is the central product of the components of $H$.
\item Every component of $H$ is normal in $H$ and is isomorphic to $L$
      or $A_5$.
\end{enumerate}
In particular, $H'$ is perfect.
\end{proposition}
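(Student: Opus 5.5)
By Lemma~\ref{lem:closure}, every $H\in\F=\form(\{A,L\})$ has the form $H\iso D/N$, where $D\sd A^a\times L^b$ for some $a,b\geq0$ and $N\normal D$. I will first prove the three statements for $D$ itself, using Lemma~\ref{lem:subdirect-mixed}, and then pass to the quotient $D/N$.

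\emph{The case $H=D$.} By Lemma~\ref{lem:subdirect-mixed}, $D/D'$ is elementary abelian and $D'=P_1\times\cdots\times P_r$ with each $P_j\iso L$ normal in $D$. So the $P_j$ are components of $D$. Conversely, let $E$ be a component of $D$. If $E\leq P_j$ for some $j$, then $E=P_j$, because $E$ is perfect and a proper subnormal subgroup of $P_j\iso L$ lies in $Z(P_j)$ and so is solvable. If $E$ lies in no $P_j$, then $E$ centralizes every $P_j$, since each $P_j$ is a component and distinct components commute. But $E\leq D'$ because $E$ is perfect. Hence $E$ centralizes $D'$, so $E\leq Z(D')$, which is abelian; this contradicts $E$ being perfect and nontrivial. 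Therefore $\Comp(D)=\{P_1,\dots,P_r\}$, and all three statements hold for $D$.

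\emph{Passing to $H=D/N$.} Statement (i) is immediate, since $H'=D'N/N$. For (ii) and (iii), the image $\overline{P_j}=P_jN/N$ is a quotient of $L$, hence is $1$, $C_2$ (impossible, since $L$ is perfect), $A_5$ or $L$. Thus each nontrivial $\overline{P_j}$ is quasisimple, normal in $H$, and isomorphic to $L$ or $A_5$; in particular it is a component of $H$. Moreover $H'=\prod_j\overline{P_j}$, and the factors commute pairwise, so $H'$ is their central product and is perfect. It remains to check that $H$ has no further components. I will repeat the argument used for $D$: a component $E$ of $H$ lies in $H'$ because it is perfect. If $E$ lies in no $\overline{P_j}$, then $E$ centralizes every $\overline{P_j}$, hence $E\leq Z(H')$, which is impossible. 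If $E\leq\overline{P_j}$, then $E=\overline{P_j}$ as before, because a proper subnormal subgroup of a quasisimple group with centre of order at most $2$ is central.

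The main obstacle is exactly this exclusion of extra components. It depends on the cited fact that a component not contained in a given component centralizes it. To apply that fact cleanly, I would use the Craven statement that every component of $G$ either is a component of the subnormal subgroup $U=\overline{P_j}$ or centralizes $U$. A component of $\overline{P_j}$ must be $\overline{P_j}$ itself, since its proper subnormal subgroups are central. With this, all three statements and the perfectness of $H'$ follow.
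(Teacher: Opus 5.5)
Your proposal is correct and follows essentially the same route as the paper: write $H$ as a quotient of some $D\sd A^a\times L^b$, use Lemma~\ref{lem:subdirect-mixed} to decompose $D'$, take the non-trivial images of the blocks $P_j$ as components, and rule out further components by noting that a component lies in $H'$ and would otherwise centralize $H'$. Two of your steps are unnecessary. The separate treatment of $D$ is just the quotient case with $N=1$. The sub-case $E\leq\overline{P_j}$ is already covered by ``distinct components commute'', which places $E$ in $Z(H')$ in either case.
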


\begin{proof}
By Lemma~\ref{lem:closure} and \eqref{eq:ambient}, there is an
epimorphism $f:D\epi H$, where $D\sd A^a\times L^b$ for some
$a,b\geq0$. By Lemma~\ref{lem:subdirect-mixed},
$D'=P_1\times\cdots\times P_r$, where every $P_i\iso L$ is normal in
$D$, and $D/D'$ is elementary abelian. Since $H/H'=f(D)/f(D')$ is an
epimorphic image of $D/D'$, this proves (i).

The kernel of $f|_{P_i}$ is normal in $P_i$, and so it is $1$,
$Z(P_i)$ or $P_i$. Hence every non-trivial image $E_i=f(P_i)$ is
isomorphic to $L$ or $A_5$ and is a normal quasisimple subgroup of
$H$, that is, a component of $H$. These non-trivial images are
pairwise distinct: if two of them coincided, then the common image
would be abelian, since the corresponding factors commute. Moreover,
they commute and generate $f(D')=H'$, so $H'$ is their central
product.

Finally, $H$ has no other components. Indeed, every component $E$ of
$H$ is perfect, and so $E\leq H'$. If $E$ were distinct from all the
$E_i$, then $E$ would commute with every $E_i$, since distinct
components commute, and hence $E$ would commute with $H'$. But then
$E\leq Z(H')$, a contradiction. The proposition is proved.
\end{proof}

\section{Central-valued maps and the formations \texorpdfstring{$\C_k$}{Ck}}\label{sec:invariants}

Let $H\in\F$. For every $E\in\Comp(H)$, we define
\[
 c_E=
 \begin{cases}
 \text{the unique non-identity element of }Z(E),&E\iso L,\\
 1,&E\iso A_5.
 \end{cases}
\]
Since $E\normal H$, the element $c_E$ is fixed by $H$, so
$c_E\in Z(H)$. Conjugation on $E$ induces a homomorphism
\[
 \lambda_E:H\maps\Out(E)\iso (\FF_2,+).
\]
Since there is a unique isomorphism between the two groups of order $2$
in this formula, the homomorphism $\lambda_E$ involves no choice. We
call $\lambda_E$ the \emph{outer-action character} of $E$.

For every positive integer $k$, let
\begin{equation}\label{eq:mu}
 \mu_k^H(h_1,\ldots,h_k)=
 \prod_{E\in\Comp(H)}
 c_E^{\,\lambda_E(h_1)\cdots\lambda_E(h_k)}
 \qquad(h_1,\ldots,h_k\in H).
\end{equation}
Here the exponents lie in $\FF_2$ and are interpreted as $0$ or $1$,
and the empty product is equal to $1$. Let
\[
 V(H)=\langle c_E:E\in\Comp(H)\rangle\leq Z(H).
\]
Then $V(H)$ is an elementary abelian $2$-group. Since every
$\lambda_E$ vanishes on $H'$, the map $\mu_k^H$ factors through a map
\[
 (H/H')^k\maps V(H),
\]
which is symmetric and $k$-linear over $\FF_2$ if its codomain $V(H)$
is written additively. We emphasize that \eqref{eq:mu} is intrinsic to
$H$: it depends neither on a choice of subdirect presentation nor on an
ordering of the components.

\begin{lemma}\label{lem:epimorphism}
Let $f:H\epi J$ be an epimorphism with $H,J\in\F$. For every $k\geq1$,
\begin{equation}\label{eq:naturality}
 f\bigl(\mu_k^H(h_1,\ldots,h_k)\bigr)
 =\mu_k^J\bigl(f(h_1),\ldots,f(h_k)\bigr).
\end{equation}
\end{lemma}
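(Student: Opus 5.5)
Since $f$ maps subnormal subgroups to subnormal subgroups and quasisimple groups onto quasisimple groups or the trivial group, the plan is to compare components of $H$ and $J$ directly and check that each factor of \eqref{eq:mu} behaves correctly. By Proposition~\ref{prop:structure}, $H'$ is the central product of the members of $\Comp(H)$, every component is normal, and $f(H')=J'$. For $E\in\Comp(H)$, the image $f(E)$ is a normal subgroup of $J$ that is a quotient of $E$. By the elementary fact on proper normal subgroups of quasisimple groups, $\ker(f|_E)$ is either $E$ or lies in $Z(E)$. So $f(E)$ is trivial or quasisimple, hence a component of $J$.

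The first step is to show that $E\mapsto f(E)$ is a bijection from $\{E\in\Comp(H):f(E)\neq1\}$ onto $\Comp(J)$. For injectivity, argue as in the proof of Proposition~\ref{prop:structure}: distinct components commute, so if $f(E_1)=f(E_2)$ with $E_1\neq E_2$, this common image would be abelian. For surjectivity, note that $J'=f(H')$ is generated by the images $f(E)$. A component of $J$ that is not of this form would commute with all of them, so it would lie in $Z(J')$, which is impossible.

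The second step compares the data $c$ and $\lambda$ along this bijection. Put $F=f(E)\neq1$. Since $f(Z(E))\le Z(F)$, we get $f(c_E)=c_F$ in every case but one. That case is $E\iso L$ and $F\iso A_5$, i.e.\ $\ker(f|_E)=Z(E)$; then $f(c_E)=1=c_F$ as well. If $f(E)=1$, then $f(c_E)=1$, so such components contribute nothing to the image. For the characters, we must show $\lambda_F(f(h))=\lambda_E(h)$. Conjugation by $h$ on $E$ induces, via $f$, conjugation by $f(h)$ on $F$. This gives a map $\Aut(E)\to\Aut(F)$ along the quotient $E\epi F$, which is well defined because the kernel $\ker(f|_E)\le Z(E)$ is characteristic. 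It sends inner automorphisms to inner ones. Lemma~\ref{lem:base} says that $\Aut(L)\to\Aut(L/Z)$ is an isomorphism; since the isomorphism identifies $\Inn(L)$ with $\Inn(A_5)$, we get $\Out(L)\iso\Out(A_5)$, so outerness is preserved and reflected. Hence $h$ acts outerly on $E$ if and only if $f(h)$ acts outerly on $F$.

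Finally, apply the homomorphism $f$ to the product \eqref{eq:mu}. Terms with $f(E)=1$ disappear. Each remaining term becomes $c_F^{\lambda_F(f(h_1))\cdots\lambda_F(f(h_k))}$, and by the bijection these range exactly over $\Comp(J)$; this yields \eqref{eq:naturality}. I expect the main obstacle to be the careful justification that outer actions correspond in the case $E\iso L$, $F\iso A_5$, where it is precisely Lemma~\ref{lem:base} that does the work. The bijection of components also needs care, since it must exclude components of $J$ not arising from $H$.
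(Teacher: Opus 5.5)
Your proposal is correct and follows the paper's proof: you set up the same bijection between the components of $H$ with non-trivial image and $\Comp(J)$, then compare the factors of \eqref{eq:mu} one by one. The step you single out as the main obstacle is actually unnecessary, since when $f(E)\iso A_5$ both $f(c_E)$ and $c_{f(E)}$ are trivial, so the characters only need comparing when $f|_E$ is an isomorphism (your argument via Lemma~\ref{lem:base} is nonetheless valid).
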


\begin{proof}
Let $E\in\Comp(H)$. Then $f(E)$ is either trivial or a normal
quasisimple subgroup of $J$ isomorphic to $L$ or $A_5$. Distinct
components with non-trivial images have distinct images, since
otherwise their common image would be abelian. Moreover, these images exhaust $\Comp(J)$. Indeed, they
generate $f(H')=J'$, and an additional component, being perfect, would
lie in $J'$ and centralize $J'$, as in the proof of
Proposition~\ref{prop:structure}.

We now compare the contributions to \eqref{eq:naturality} component by
component. If $f(E)\iso L$, then $E\iso L$ and $f|_E$ is an
isomorphism. This isomorphism maps $c_E$ to $c_{f(E)}$ and is
compatible with conjugation, so
\[
 \lambda_{f(E)}(f(h))=\lambda_E(h)\qquad(h\in H).
\]
If $f(E)\iso A_5$, then $f(c_E)=1=c_{f(E)}$, irrespective of the
values of $\lambda_E$. If $f(E)=1$, then the contribution of $E$ is
also killed. Multiplying these contributions, we get
\eqref{eq:naturality}. The lemma is proved.
\end{proof}

\begin{lemma}\label{lem:normal}
Let $U\normal H\in\F$. Then
\begin{equation}\label{eq:components-normal}
 \Comp(U)=\{E\in\Comp(H):E\leq U\},
\end{equation}
and, for every $k\geq1$,
\begin{equation}\label{eq:restriction}
 \mu_k^U(u_1,\ldots,u_k)=\mu_k^H(u_1,\ldots,u_k)
 \qquad(u_1,\ldots,u_k\in U).
\end{equation}
\end{lemma}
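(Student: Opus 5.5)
Note first that $U\in\F$: the formation $\F$ is $S_n$-closed by Lemma~\ref{lem:ambient-closed}, so $U\in\F$ and $\mu_k^U$ is defined. The plan is to prove \eqref{eq:components-normal} first and then compare the products in \eqref{eq:restriction} factor by factor.

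For \eqref{eq:components-normal}, the inclusion ``$\supseteq$'' is immediate. If $E\in\Comp(H)$ and $E\leq U$, then $E$ is quasisimple, and $E\normal H$ by Proposition~\ref{prop:structure}(iii), so $E\normal U$; hence $E$ is a component of $U$. For ``$\subseteq$'', let $F\in\Comp(U)$. Since $F\sn U\normal H$, the subgroup $F$ is subnormal and quasisimple in $H$, so $F\in\Comp(H)$, and trivially $F\leq U$.

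For \eqref{eq:restriction}, fix $u_1,\ldots,u_k\in U$ and split $\Comp(H)$ into the components lying in $U$ and the others. For $E\leq U$, the element $c_E$ depends only on $E$. The character $\lambda_E$ computed in $U$ is the restriction to $U$ of $\lambda_E$ computed in $H$, since both record whether conjugation by $u$ induces an outer automorphism of $E$. Hence these factors agree on both sides.

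It remains to show that a component $E\in\Comp(H)$ with $E\not\leq U$ contributes trivially to $\mu_k^H(u_1,\ldots,u_k)$. By the fact quoted in Section~\ref{sec:prelim} (\cite[Lemmas~3.24--3.25]{Craven}), every component of $H$ is either a component of $U$ or centralizes $U$. So $E$ centralizes $U$. Then each $u_i$ acts trivially on $E$, hence $\lambda_E(u_i)=0$, and the exponent of $c_E$ is $0$. Alternatively, one can argue directly: $E\cap U$ is a proper normal subgroup of $E$, so $E\cap U\leq Z(E)$, and hence $[E,U]\leq E\cap U$ is central in $E$. Then each $u\in U$ induces on $E$ an automorphism trivial modulo $Z(E)$. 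By the argument in Lemma~\ref{lem:base} (perfectness of $E$), this automorphism is trivial, and the same reasoning works for $E\iso A_5$.

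Multiplying the contributions gives \eqref{eq:restriction}. I expect the only delicate point to be this last step, the vanishing of $\lambda_E$ on $U$ for components not contained in $U$. It is handled by the centralizing dichotomy, with the direct perfectness argument as a backup.
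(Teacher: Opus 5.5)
Your proof is correct and follows the paper's argument: the description of $\Comp(U)$ is proved the same way, and for a component $E\nleq U$ the paper uses exactly your backup argument, namely $[U,E]\leq U\cap E\leq Z(E)$ together with the perfectness of $E$, to get $[U,E]=1$ and $\lambda_E|_U=0$. Your primary route through the quoted dichotomy (a component of $H$ is either a component of $U$ or centralizes $U$) is also valid, whereas the paper gives the direct argument instead of invoking that fact.
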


\begin{proof}
First, $U\in\F$ by Lemma~\ref{lem:ambient-closed}. A component of $H$
contained in $U$ is a normal quasisimple subgroup of $U$, and so it is
a component of $U$. Conversely, a component of $U$ is subnormal in $H$,
and so it is a component of $H$. This proves
\eqref{eq:components-normal}.

Now let $E\in\Comp(H)$ with $E\nleq U$. Since $E$ and $U$ are normal
in $H$, the subgroup $U\cap E$ is a proper normal subgroup of $E$.
Hence
\[
 [U,E]\leq U\cap E\leq Z(E).
\]
For a fixed $u\in U$, the map $e\mapsto[e,u]$ from $E$ to $Z(E)$ is a
homomorphism, since all its values are central in $E$. Since $E$ is
perfect, this map is trivial. Thus
\begin{equation}\label{eq:omitted}
 E\nleq U\quad\Longrightarrow\quad[U,E]=1
 \quad\Longrightarrow\quad\lambda_E|_U=0.
\end{equation}
For the components $E$ contained in $U$, the elements $c_E$ and the
outer-action characters do not change under restriction. For
the other components, \eqref{eq:omitted} shows that their contribution
to \eqref{eq:mu} is trivial on $U^k$. Hence \eqref{eq:restriction}
holds. The lemma is proved.
\end{proof}

\begin{proposition}\label{prop:formations}
For every $k\geq1$, the class
\begin{equation}\label{eq:Ck}
 \C_k=\{H\in\F:\mu_k^H\equiv1\}
\end{equation}
is an $S_n$-closed formation. Moreover, $\C_{k+1}\subseteq\C_k$.
\end{proposition}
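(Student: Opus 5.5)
We have to verify four things for $\C_k$: it is non-empty and closed under epimorphic images, closed under finite subdirect products, closed under subnormal subgroups, and it satisfies the inclusion $\C_{k+1}\subseteq\C_k$. Non-emptiness is immediate, since $1\in\C_k$ (it has no components). Throughout, $\F$ is already an $S_n$-closed formation by Lemma~\ref{lem:ambient-closed}. So every group we form stays in $\F$, and the maps $\mu_k$ are defined on it.

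\emph{Epimorphic images.} Let $f:H\epi J$ with $H\in\C_k$. Given $j_1,\ldots,j_k\in J$, lift them to $h_i\in H$. Lemma~\ref{lem:epimorphism} then gives
\[
\mu_k^J(j_1,\ldots,j_k)=f\bigl(\mu_k^H(h_1,\ldots,h_k)\bigr)=1.
\]

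\emph{Subnormal subgroups.} Let $U\sn H$ with $H\in\C_k$, and take a chain $U=U_0\normal U_1\normal\cdots\normal U_m=H$. Each $U_i$ lies in $\F$. Applying Lemma~\ref{lem:normal} down the chain shows that $\mu_k^{U_i}$ is the restriction of $\mu_k^{U_{i+1}}$. Hence $\mu_k^U$ is a restriction of $\mu_k^H\equiv1$, and so $U\in\C_k$.

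\emph{Subdirect products.} This is the step needing care. Let $D\sd H_1\times\cdots\times H_s$ with all $H_i\in\C_k$. Then $D\in\F$, and we write $\pi_i:D\epi H_i$ for the projections. By Lemma~\ref{lem:epimorphism},
\[
\pi_i\bigl(\mu_k^D(d_1,\ldots,d_k)\bigr)=\mu_k^{H_i}\bigl(\pi_i(d_1),\ldots,\pi_i(d_k)\bigr)=1
\]
for every $i$. An element of $D$ whose projections are all trivial is itself trivial, since $\bigcap_i\ker\pi_i=1$. Therefore $\mu_k^D\equiv1$. The key point is that Lemma~\ref{lem:epimorphism} applies to each projection because $D$ and $H_i$ both lie in $\F$; we never need to analyse the components of $D$ directly.

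\emph{The inclusion $\C_{k+1}\subseteq\C_k$.} Suppose $\mu_{k+1}^H\equiv1$. Setting $h_{k+1}=h_k$ gives, for each $E$, the exponent $\lambda_E(h_1)\cdots\lambda_E(h_k)^2$. Since the values lie in $\FF_2$, we have $\lambda_E(h_k)^2=\lambda_E(h_k)$. Hence
\[
\mu_k^H(h_1,\ldots,h_k)=\mu_{k+1}^H(h_1,\ldots,h_k,h_k)=1.
\]

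I expect no real obstacle. The only subtle point is remembering that $\mu_k$ is defined only on $\F$, so every closure step must stay inside $\F$, and Lemma~\ref{lem:ambient-closed} guarantees this.
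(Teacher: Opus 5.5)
Your proposal is correct and follows the paper's proof essentially step for step: epimorphic images via Lemma~\ref{lem:epimorphism}, subdirect products via the coordinate projections whose kernels intersect trivially, subnormal subgroups by iterating Lemma~\ref{lem:normal} along a subnormal chain, and the inclusion via $a^2=a$ in $\FF_2$. The only point the paper states explicitly that you leave tacit is closure under isomorphisms, which is immediate because $\mu_k^H$ is defined intrinsically.
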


\begin{proof}
Clearly, $\C_k$ contains the identity group, and $\C_k$ is closed under
isomorphisms since \eqref{eq:mu} is intrinsic. By
Lemma~\ref{lem:epimorphism}, $\C_k$ is closed under taking epimorphic
images.

Now let $D\sd H_1\times\cdots\times H_s$, where $H_i\in\C_k$ for all
$i$. Then $D\in\F$, since $\F$ is a formation. For each coordinate
epimorphism $\pi_i:D\epi H_i$, Lemma~\ref{lem:epimorphism} gives
\[
 \pi_i\bigl(\mu_k^D(x_1,\ldots,x_k)\bigr)
 =\mu_k^{H_i}\bigl(\pi_i(x_1),\ldots,\pi_i(x_k)\bigr)=1.
\]
Since the intersection of the kernels of the $\pi_i$ is trivial, we
have $\mu_k^D\equiv1$. Hence $\C_k$ is closed under finite subdirect
products, and so $\C_k$ is a formation.

By Lemma~\ref{lem:normal}, every normal subgroup of a group in $\C_k$
belongs to $\C_k$. Applying this successively along a subnormal chain,
we see that $\C_k$ is $S_n$-closed.

Finally, since $a^2=a$ for all $a\in\FF_2$, we have
\begin{equation}\label{eq:diagonal}
 \mu_{k+1}^H(h_1,\ldots,h_k,h_k)=\mu_k^H(h_1,\ldots,h_k).
\end{equation}
Thus $\mu_{k+1}^H\equiv1$ implies $\mu_k^H\equiv1$, and so
$\C_{k+1}\subseteq\C_k$. The proposition is proved.
\end{proof}

\begin{remark}\label{rem:boolean}
The maps $\mu_k^H$ admit the following elementary interpretation in
terms of polynomial functions. Let $B_H=H/H'$, and write $V(H)$
additively. For $\chi\in B_H^*$, let
\[
 v_\chi=\sum_{\lambda_E=\chi}c_E.
\]
Then the additive form of \eqref{eq:mu} is
\[
 \widetilde\mu_k^H(b_1,\ldots,b_k)
 =\sum_{\chi\ne0}v_\chi\chi(b_1)\cdots\chi(b_k).
\]
Equivalently, $\mu_k^H\equiv1$ if and only if
$\sum_E c_E f(\lambda_E)=0$ for every polynomial function
$f:B_H^*\maps\FF_2$ of degree at most $k$ with $f(0)=0$.
Indeed, products of evaluation functions span this space, and a product
of positive degree less than $k$ can be padded by repeated factors,
since $x^2=x$ on $\FF_2$. The condition $f(0)=0$ cannot be omitted.
Of course, this is only a reformulation of the definition of $\C_k$;
the closure properties of $\C_k$ rest on Lemmas~\ref{lem:epimorphism}
and~\ref{lem:normal}.
\end{remark}

\begin{remark}\label{rem:closure-scope}
Every soluble group in $\F$ is an elementary abelian $2$-group.
Indeed, by Proposition~\ref{prop:structure}, the derived subgroup of
such a group is both soluble and perfect, and so it is trivial. Thus
$C_3,C_4\notin\F$, whereas $L,C_2\in\C_k$ for every $k$. In
particular, neither $\F$ nor any $\C_k$ is closed under taking
subgroups, since $C_3\leq L$. These formations are not saturated
either, since $C_4/\Phi(C_4)\iso C_2$, where $\Phi(C_4)$ denotes the
Frattini subgroup of $C_4$.
\end{remark}

\section{The groups \texorpdfstring{$H_d$}{Hd} and the proof of Theorem A}\label{sec:proof}

For an integer $d\geq2$, let
\[
 B_d=(\FF_2^d,+),\qquad
 \Lambda_d=B_d^*\setminus\{0\},\qquad r_d=2^d-1.
\]
Here $B_d^*$ is the vector space of all linear maps $B_d\maps\FF_2$.
For every $\lambda\in\Lambda_d$, let $L_\lambda$ be a copy of $L$.
Let $b\in B_d$ act on $\prod_{\lambda\in\Lambda_d}L_\lambda$ by
\[
 (x_\lambda)_{\lambda\in\Lambda_d}
 \longmapsto
 \bigl(\theta^{\lambda(b)}(x_\lambda)\bigr)_{\lambda\in\Lambda_d},
\]
and let
\begin{equation}\label{eq:Pd}
 P_d=L^{\Lambda_d}\rtimes B_d.
\end{equation}
For every $\lambda\in\Lambda_d$, the homomorphism
\[
 \rho_\lambda:P_d\maps A,\qquad
 ((x_\nu)_\nu,b)\longmapsto (x_\lambda,t^{\lambda(b)})
\]
is surjective. The intersection of the kernels of all $\rho_\lambda$ is
trivial: every $L$-coordinate of an element of this intersection is
trivial, and the non-zero linear forms separate the elements of $B_d$.
Hence
\begin{equation}\label{eq:Pd-subdirect}
 P_d\sd A^{r_d},\qquad P_d\in\form(A).
\end{equation}

The elementary abelian group $Z^{\Lambda_d}$ is central in $P_d$,
since every coordinate action fixes $z$. Let
\begin{equation}\label{eq:Kd}
 K_d=\left\{(z^{a_\lambda})_{\lambda\in\Lambda_d}:
 a_\lambda\in\FF_2,\quad
 \sum_{\lambda\in\Lambda_d}a_\lambda=0\right\},
 \qquad H_d=P_d/K_d.
\end{equation}
It follows from \eqref{eq:Pd-subdirect} that
\begin{equation}\label{eq:Hd-member}
 H_d\in\form(A)\subseteq\F.
\end{equation}

\begin{lemma}\label{lem:witness}
For every integer $d\geq2$,
\[
 H_d\in\C_{d-1}\setminus\C_d.
\]
More precisely, $\mu_k^{H_d}\equiv1$ for $1\leq k<d$, whereas
$\mu_d^{H_d}\not\equiv1$.
\end{lemma}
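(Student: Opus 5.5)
The plan is to compute $\mu_k^{H_d}$ explicitly from the definition \eqref{eq:mu}. Two ingredients are needed: the components of $H_d$ with their central elements $c_E$, and the outer-action characters $\lambda_E$.

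\emph{Components.} In $P_d$ the derived subgroup is $L^{\Lambda_d}$ by Lemma~\ref{lem:subdirect-mixed}, since the $B_d$-quotient is abelian and $L$ is perfect. Its components are the coordinate copies $L_\lambda$. In $H_d=P_d/K_d$ each $L_\lambda$ meets $K_d$ trivially, because $K_d$ contains no element with exactly one nontrivial coordinate. Hence the image $E_\lambda$ of $L_\lambda$ is isomorphic to $L$. By the argument of Lemma~\ref{lem:epimorphism}, these images are distinct and exhaust $\Comp(H_d)$. Moreover, all the elements $c_{E_\lambda}$ coincide: the image $c$ of any $z_\lambda$ is a single element, because $z_\lambda z_\nu\in K_d$. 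This element $c$ is nontrivial, since a single $z_\lambda$ does not lie in $K_d$.

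\emph{Characters.} An element $b\in B_d$ acts on $L_\lambda$ by $\theta^{\lambda(b)}$, and $\theta$ is outer by Lemma~\ref{lem:base}. Elements of $L^{\Lambda_d}$ act on $L_\lambda$ by inner automorphisms. Passing to the quotient by the central subgroup $K_d$ does not change these outer classes. Therefore $\lambda_{E_\lambda}(xb)=\lambda(b)$.

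Substituting into \eqref{eq:mu}, and writing $V(H_d)=\langle c\rangle\iso\FF_2$ additively, we get
\[
 \widetilde\mu_k^{H_d}(b_1,\ldots,b_k)=\Bigl(\sum_{\lambda\in\Lambda_d}\lambda(b_1)\cdots\lambda(b_k)\Bigr)c .
\]
Since the zero form contributes nothing, the sum may be taken over all $\lambda\in B_d^*$. It therefore suffices to evaluate the exponent sum $S_k(b_1,\ldots,b_k)=\sum_{\lambda\in B_d^*}\prod_i\lambda(b_i)$, which lies in $\FF_2$.

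\emph{Evaluating the sum.} Let $W=\langle b_1,\ldots,b_k\rangle$. The product $\prod_i\lambda(b_i)$ equals $1$ exactly when $\lambda(b_i)=1$ for all $i$. This is an affine condition on $\lambda$, given by $k$ linear equations. Its solution set is either empty or a coset of $W^\perp$, which has $2^{d-\dim W}$ elements. This number is odd only if $\dim W=d$. For $k<d$ we have $\dim W\leq k<d$, so $S_k\equiv0$, and hence $\mu_k^{H_d}\equiv1$. For $k=d$, take $b_1,\ldots,b_d$ to be a basis of $B_d$. Then there is exactly one $\lambda$ with all $\lambda(b_i)=1$, namely the nonzero dual-sum form. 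So $S_d=1$ and $\mu_d^{H_d}(b_1,\ldots,b_d)=c\neq1$.

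The main obstacle is the bookkeeping in the first step. One must justify that $\Comp(H_d)$ consists exactly of the images $E_\lambda$, and that the $\lambda_{E_\lambda}$ are computed correctly after passing to the quotient. Both can be handled via Lemma~\ref{lem:epimorphism} applied to the projection $P_d\epi H_d$, together with a direct computation in $P_d$ itself. In $P_d$ the components are the $L_\lambda$, by Proposition~\ref{prop:structure} and the block decomposition of $L^{\Lambda_d}$. The counting in the last step is routine.
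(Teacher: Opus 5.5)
Your proposal is correct and follows essentially the same route as the paper. You identify the components of $H_d$ as the isomorphic images of the coordinate copies $L_\lambda$, which share one nontrivial central involution, and you show that the outer-action character of the component coming from $L_\lambda$ is $\lambda$ itself. You then reduce $\mu_k^{H_d}$ to the parity of the number of solutions of $\lambda(b_1)=\cdots=\lambda(b_k)=1$, which is $0$ or $2^{d-\dim\langle b_1,\ldots,b_k\rangle}$, exactly as the paper does.
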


\begin{proof}
Since $B_d$ is abelian and every copy of $L$ is perfect,
$P_d'=L^{\Lambda_d}$. The intersection of $K_d$ with each coordinate
copy $L_\lambda$ is trivial, since a non-identity element of
$Z^{\Lambda_d}$ supported on one coordinate has odd weight. Hence every
$L_\lambda$ embeds in $H_d$ as a normal quasisimple subgroup. As in
the proof of Proposition~\ref{prop:structure}, the images of the
$L_\lambda$ are all the components of $H_d$, since they commute and
generate $H_d'$.

Let $z_*$ denote the image in $H_d$ of the central involution of any
one coordinate. All such images coincide, since the product of two
coordinate involutions lies in $K_d$; and $z_*\neq1$, since a single
coordinate involution does not lie in $K_d$. Since
$K_d\leq P_d'=L^{\Lambda_d}$, we have
\begin{equation}\label{eq:Hd-abelianization}
 H_d/H_d'\iso P_d/P_d'\iso B_d.
\end{equation}
The elements of the base group act on each component by inner
automorphisms, whereas $b\in B_d$ induces $\theta^{\lambda(b)}$ on the
component arising from $L_\lambda$. Thus, under the identification
\eqref{eq:Hd-abelianization}, the outer-action character of this
component is precisely $\lambda$.

Let $h_1,\ldots,h_k\in H_d$, and let $b_1,\ldots,b_k$ be their images
in $B_d$. Then \eqref{eq:mu} becomes
\begin{equation}\label{eq:parity}
 \mu_k^{H_d}(h_1,\ldots,h_k)
 =z_*^{\,N_k(b_1,\ldots,b_k)},\qquad
 N_k(b_1,\ldots,b_k)=
 \sum_{\lambda\in\Lambda_d}
 \lambda(b_1)\cdots\lambda(b_k)\in\FF_2.
\end{equation}
The exponent is the parity of the number of solutions of the system
\begin{equation}\label{eq:system}
 \lambda(b_1)=\cdots=\lambda(b_k)=1
 \qquad\text{in }\lambda\in B_d^*.
\end{equation}
The zero functional is not a solution, so replacing $B_d^*$ by
$\Lambda_d$ does not change this number. If \eqref{eq:system} is
consistent and the vectors $b_i$ span a subspace of dimension $s$,
then it has exactly $2^{d-s}$ solutions; otherwise it has none. If
$k<d$, then $s\leq k<d$, so this number is even. Hence
$\mu_k^{H_d}\equiv1$ for all $1\leq k<d$.

Now let $k=d$, and let $b_1,\ldots,b_d$ be a basis of $B_d$. Then
there is exactly one linear functional which takes the value $1$ on
every basis vector. Thus $N_d(b_1,\ldots,b_d)=1$, and for any
preimages $h_i$ of the $b_i$ we have
\[
 \mu_d^{H_d}(h_1,\ldots,h_d)=z_*\neq1.
\]
The lemma is proved.
\end{proof}

\begin{proof}[Proof of Theorem~\ref{thm:main}]
By \eqref{eq:ambient}, the group $G_0$ has order $28800$, and
$\F=\form(G_0)$ is $S_n$-closed by Lemma~\ref{lem:ambient-closed}.
By Proposition~\ref{prop:formations}, the formations $\C_k$ form a
descending chain of $S_n$-closed subformations of $\F$. By
Lemma~\ref{lem:witness}, $H_{k+1}\in\C_k\setminus\C_{k+1}$ for every
$k\geq1$. Hence all the inclusions are strict.

Now we prove the last assertion of the theorem. Let
$\D_d=\sform(H_d)$ for $d\geq2$. Since $H_d\in\C_{d-1}\subseteq\F$ and
both $\C_{d-1}$ and $\F$ are $S_n$-closed formations, we have
\[
 \D_d\subseteq\C_{d-1}\subseteq\F.
\]
If $j>i\geq2$, then
\[
 \D_j\subseteq\C_{j-1}\subseteq\C_i,
 \qquad H_i\in\D_i\setminus\C_i.
\]
Thus $\D_i\neq\D_j$. Finally, let $T_d$ be the direct product of all
subnormal subgroups of $H_d$. Then $T_d$ is a finite group, and
$\D_d=\form(T_d)$ by Lemma~\ref{lem:closure}. Therefore every $\D_d$
is one-generated even as an ordinary formation. The theorem is proved.
\end{proof}

\begin{remark}\label{rem:orders}
The groups distinguishing the successive inclusions have orders
\[
 |H_d|=\frac{120^{r_d}\,2^d}{2^{r_d-1}}
       =2^{d+1}60^{\,2^d-1}.
\]
In particular, $|H_2|=1\,728\,000$. Thus their orders grow with $d$,
whereas the generating group $G_0$ is fixed. We do not consider here
the question of the smallest possible order of a group generating a
counterexample.
\end{remark}

\begin{remark}\label{rem:first-example}
Let $d=2$. Then the non-zero linear forms are $x$, $y$ and $x+y$.
Their first-degree sum is zero. On the ordered basis $(e_1,e_2)$, only
$x+y$ contributes to the second-degree sum, which is therefore equal
to one. Thus $H_2\in\C_1\setminus\C_2$. In higher dimensions, the same
phenomenon is expressed by the number of solutions of
\eqref{eq:system}, so that no computation in $H_d$ itself is needed.
\end{remark}

\begin{remark}\label{rem:determinant}
With respect to the standard coordinates on $B_d=\FF_2^d$, the counting
argument in the proof of Lemma~\ref{lem:witness} also gives
\[
 N_d(b_1,\ldots,b_d)=\det[b_1\ \cdots\ b_d]\in\FF_2.
\]
Indeed, if the vectors are linearly dependent, then their span has
dimension less than $d$, so the number of solutions of
\eqref{eq:system} is either zero or even, and both sides are zero. If
they form a basis, then both sides are equal to one. Consequently,
$N_d$ is the non-zero alternating $d$-linear form on $B_d$, whereas
$N_k=0$ for $k<d$ by Lemma~\ref{lem:witness}.
\end{remark}

\section{The formation \texorpdfstring{$\form(A)$}{form(A)} and its
\texorpdfstring{$S_n$}{Sn}-closure}\label{sec:comparison}

Recall that $\mathfrak Q=\form(A)$. By Burichenko's theorem,
$\mathfrak Q$ has infinitely many ordinary subformations (see
\cite{Burichenko2012} and its restatement in
\cite[Theorem~1]{Burichenko2013}). The difference between this fact
and Theorem~\ref{thm:main} can be seen without using the
classification of that lattice.

\begin{proposition}\label{prop:ordinary-comparison}
One has $A_5\notin\mathfrak Q$. The only non-empty $S_n$-closed
subformations of $\mathfrak Q$ are $\form(1)$ and $\form(C_2)$.
Consequently,
\[
 \mathfrak Q\subsetneq\sform(A)=\F.
\]
\end{proposition}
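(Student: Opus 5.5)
Every group $H\in\mathfrak Q$ is an epimorphic image $f:D\epi H$ with $D\sd A^a$ for some $a\geq0$. I will find an invariant of $H$, preserved under epimorphic images, that rules out $A_5$ and also $L$.

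\emph{Key claim.} For every $H\in\mathfrak Q$ and every component $E$ of $H$, the outer-action character $\lambda_E$ is non-zero. In particular, $E\iso L$.

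By Lemma~\ref{lem:subdirect-mixed}, $D'=P_1\times\cdots\times P_r$. Each support $I_j$ consists of $A$-coordinates only, and $D$ normalizes each $P_j$. Consider a block $P_j$, untwisted. Since $D$ projects onto each coordinate $A$, there is an element of $D$ whose coordinate at a chosen $i\in I_j$ lies outside $L$. By \eqref{eq:normalizer-A}, this element lies in $\Delta_{I_j}(A)Z^{I_j}$, so it induces the outer automorphism $\theta$ on $P_j$. Hence $\lambda_{P_j}\neq0$ on $D$.

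Next, by the proof of Proposition~\ref{prop:structure} and Lemma~\ref{lem:epimorphism}, the components of $H$ are exactly the non-trivial images $f(P_j)$, and $\lambda_{f(P_j)}\circ f=\lambda_{P_j}$ whenever $f|_{P_j}$ is injective. If instead $f(P_j)\iso A_5$, I argue that $\lambda$ still transfers. An element of $D$ inducing an outer automorphism of $P_j\iso L$ induces an outer automorphism of $P_j/Z(P_j)\iso A_5$, by Lemma~\ref{lem:base}: $\Aut(L)\maps\Aut(A_5)$ is an isomorphism carrying $\Inn(L)$ onto $\Inn(A_5)$. So $\lambda_{f(P_j)}\neq0$ in all cases.

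It remains to exclude components isomorphic to $A_5$. Here I use the centre. I expect this to be the main obstacle. If $E=f(P_j)\iso A_5$, then $\ker f\cap P_j=Z(P_j)$. I will try to show this is impossible by exploiting the fact that $Z(P_j)=\Delta(Z)$ is the diagonal central element of the block. It is not yet clear how. An alternative is simply to note that $A_5\in\mathfrak Q$ would force $A_5$ to have a non-zero outer-action character on itself, which is false since $H=A_5$ has $H/H'=1$. This alternative is cleaner: by the key claim, every component of $H$ has $\lambda_E\neq0$, so $H\neq H'$ whenever $\Comp(H)\neq\emptyset$. Applying this to $A_5$ and $L$ (perfect with non-empty $\Comp$) gives $A_5,L\notin\mathfrak Q$. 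So the $A_5$-component issue need not be resolved at all.

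\emph{$S_n$-closed subformations.} Let $\X\subseteq\mathfrak Q$ be $S_n$-closed and non-empty. If some $H\in\X$ has $H'\neq1$, then a component $E$ of $H$ exists by Proposition~\ref{prop:structure}(ii). Since $E$ is subnormal, $E\in\X\subseteq\mathfrak Q$. But $E$ is perfect with $\Comp(E)=\{E\}$, contradicting the key claim. Hence every $H\in\X$ has $H'=1$, and $H$ is elementary abelian of exponent $2$ by Proposition~\ref{prop:structure}(i). So $\X\subseteq\form(C_2)$.

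The subformations of $\form(C_2)$ are $\form(1)$ and $\form(C_2)$, since any non-trivial elementary abelian $2$-group has $C_2$ as a quotient. Both are $S_n$-closed, and $C_2\in\mathfrak Q$ as a quotient of $A$. Finally, $\mathfrak Q$ itself is not $S_n$-closed, because $L\sn A$ but $L\notin\mathfrak Q$. Together with Lemma~\ref{lem:ambient-closed}, this gives $\mathfrak Q\subsetneq\sform(A)=\F$.
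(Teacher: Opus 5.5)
Your proof is correct and is essentially the paper's argument. An element of $D$ whose coordinate in the support of a block $P_j$ lies in $A\setminus L$ acts on $P_j$ by an outer automorphism; this persists in the image component (also when that image is $A_5$, via $\Aut(L)\iso\Aut(A_5)$); and this is impossible in a perfect group such as $A_5$ or $L$. Phrasing this as a statement about all components of groups in $\mathfrak Q$, and applying it directly to a subnormal component, is only a repackaging.

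Do delete the assertion ``In particular, $E\iso L$'' from your key claim. It is false: $S_5\iso A/Z\in\mathfrak Q$ has the component $A_5$. So the case you tried to exclude genuinely occurs, and, as you concluded, it need not be excluded.
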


\begin{proof}
First suppose that $A_5\in\mathfrak Q$. Then, by
Lemma~\ref{lem:closure}, there is an epimorphism $f:D\epi A_5$, where
$D\sd A^n$ for some $n\geq1$. By Lemma~\ref{lem:subdirect-mixed},
$D'=P_1\times\cdots\times P_r$, where every $P_j\iso L$ is a full
twisted diagonal subgroup which is normal in $D$. Every $f(P_j)$ is
normal in $A_5$, and so it is either trivial or equal to $A_5$. Since
$f(D')=A_5$ and these images commute, exactly one of them, say
$f(P_j)$, is non-trivial. The kernel of $f|_{P_j}$ is $Z(P_j)$, so
this restriction induces an isomorphism
\[
 \overline f:P_j/Z(P_j)\maps A_5.
\]

Let $i$ be a coordinate in the support of $P_j$. Then the coordinate
map $\pi_i|_{P_j}:P_j\maps L$ is an isomorphism. Since $D$ is
subdirect, there is an element $x\in D$ with $\pi_i(x)=t$, where $t$ is
the involution in Lemma~\ref{lem:base}. Under this coordinate
isomorphism, conjugation by $x$ on $P_j$ corresponds to conjugation by
$t$ on $L$, that is, to $\theta$. Hence the automorphism of
$P_j/Z(P_j)$ induced by $x$ is outer, since the isomorphism
$\Aut(L)\maps\Aut(L/Z)$ in Lemma~\ref{lem:base} maps $\Inn(L)$ onto
$\Inn(L/Z)$. On the other hand,
\[
 \overline f\bigl(xpx^{-1}Z(P_j)\bigr)
 =f(x)\,\overline f\bigl(pZ(P_j)\bigr)\,f(x)^{-1}
 \qquad(p\in P_j).
\]
Thus $\overline f$ transforms this automorphism into an inner
automorphism of $A_5$, a contradiction. Hence $A_5\notin\mathfrak Q$,
and so also $L\notin\mathfrak Q$, since $L/Z\iso A_5$.

Now let $\mathfrak B\subseteq\mathfrak Q$ be a non-empty $S_n$-closed
formation. Suppose that some group $H\in\mathfrak B$ is non-abelian.
Then, by Proposition~\ref{prop:structure}, $H$ has a component
$E\normal H$ isomorphic to $L$ or $A_5$. Since $\mathfrak B$ is closed
under taking subnormal subgroups and quotients, we get
$A_5\in\mathfrak B\subseteq\mathfrak Q$, contrary to what we have just
proved. Therefore every group in $\mathfrak B$ is abelian, and so,
again by Proposition~\ref{prop:structure}, elementary abelian of
exponent dividing $2$. Hence $\mathfrak B\subseteq\form(C_2)$. If
$\mathfrak B$ contains a non-identity group, then it contains a quotient
isomorphic to $C_2$, so $\mathfrak B=\form(C_2)$. Otherwise,
$\mathfrak B=\form(1)$. Both possibilities occur, since $C_2$ is a
quotient of $A$. Finally, $L\in\F\setminus\mathfrak Q$, and so
Lemma~\ref{lem:ambient-closed} gives the asserted proper inclusion.
The proposition is proved.
\end{proof}

\begin{remark}\label{rem:closure-comparison}
The ordinary subformations
\[
 \mathfrak E_k=\C_k\cap\mathfrak Q\qquad(k\geq1)
\]
form a strictly descending chain. Indeed, by the construction in
Section~\ref{sec:proof}, $H_{k+1}\in\mathfrak Q$, and so
Lemma~\ref{lem:witness} gives
$H_{k+1}\in\mathfrak E_k\setminus\mathfrak E_{k+1}$. By
Proposition~\ref{prop:ordinary-comparison}, no $\mathfrak E_k$ is
$S_n$-closed, since $\mathfrak E_k$ contains the non-abelian group
$H_{k+1}$. Nevertheless, their $S_n$-closures remain distinct. More
precisely, let $\widehat{\mathfrak E}_k=\sform(\mathfrak E_k)$. By
monotonicity, and since $\C_k$ is $S_n$-closed, we have
\[
 \widehat{\mathfrak E}_{k+1}\subseteq\widehat{\mathfrak E}_k
 \subseteq\C_k\subseteq\F,
 \qquad
 H_{k+1}\in\widehat{\mathfrak E}_k
 \setminus\widehat{\mathfrak E}_{k+1}.
\]
For the last assertion, we use the inclusion
$\widehat{\mathfrak E}_{k+1}\subseteq\C_{k+1}$. Thus the chain remains
strictly descending after passing to $S_n$-closures in the fixed
formation $\sform(A)$. Note that this argument does not use
Burichenko's classification. In fact,
$\widehat{\mathfrak E}_k=\C_k$ by Corollary~\ref{cor:chain-recovery}
below.
\end{remark}

\section{\texorpdfstring{$S_n$}{Sn}-closed subformations of
\texorpdfstring{$\F$}{F} and the proof of Theorem B}\label{sec:further}

We retain the notation $\mathfrak Q=\form(A)$ and
$\mathfrak R=\form(L)$, and for formations $\mathfrak U$ and
$\mathfrak V$ we write
$\mathfrak U\vee\mathfrak V=\form(\mathfrak U\cup\mathfrak V)$. Note
that all joins in this section are joins of ordinary formations. First
we prove the recovery formula; this part does not use any
classification of subformations.

\begin{lemma}\label{lem:mixed-splitting}
If $D\sd A^a\times L^b$, where $a,b\geq0$, then
\[
 D\iso L^s\times D_0
\]
for some $s\geq0$ and some $D_0\in\mathfrak Q$.
\end{lemma}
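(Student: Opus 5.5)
The plan is to use Lemma~\ref{lem:subdirect-mixed}: the diagonal blocks of $D'=P$ have supports lying entirely among $A$-coordinates or entirely among $L$-coordinates. Write $G=A^a\times L^b$, and let $\pi_A:D\to A^a$ and $\pi_L:D\to L^b$ be the two projections. Since the normalizer computation \eqref{eq:normalizer-mixed} shows that on the $L$-part $D$ lies in $\prod_j \Delta_{I_j}(L)Z^{I_j}$, the $L$-side is essentially a product of $s$ copies of $L$ up to central involutions. We will split off these copies.

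\emph{Step 1: locate the copies of $L$.} Let $Q_1,\dots,Q_s$ be the diagonal blocks of $P$ supported on $L$-coordinates, and let $Q=Q_1\times\cdots\times Q_s\iso L^s$. Each $Q_j$ is normal in $D$ by Lemma~\ref{lem:subdirect-mixed}. We first show $C_D(Q)\cap Q=Z(Q)$. We also show that $D$ acts on each $Q_j$ by inner automorphisms. This holds because every element of $D$ has $L$-coordinates in $L$, so the automorphism it induces on an $L$-coordinate of the support is inner.

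\emph{Step 2: build a complement.} Put $C=C_D(Q)$. Since $D$ induces only inner automorphisms on $Q$, we have $D=QC$ and $Q\cap C=Z(Q)\iso Z^s$. This gives a central product, not yet a direct one, so the main obstacle is to get rid of the amalgamated centre. For that we use the explicit coordinates. Pick in each support $I_j$ one coordinate $i_j$, and let $D_0$ be the image of $D$ in the projection of $G$ that deletes the coordinates $i_1,\dots,i_s$. We then claim the map
\[
 D\maps L^s\times D_0,\qquad x\mapsto\bigl((\pi_{i_j}(x))_j,\ \text{(remaining coordinates of }x)\bigr)
\]
is an isomorphism. It is injective trivially, since it merely reorders coordinates. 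For surjectivity, $D$ contains $Q$, and $Q$ projects onto $L^s$ on the chosen coordinates. The difficulty is that $Q$ has nontrivial coordinates elsewhere in its supports.

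So instead we choose $D_0$ more carefully. Remove the \emph{whole} supports $I_j$ of the $Q_j$, and keep only the chosen coordinate. Define $\phi:D\to L^s\times\pi_{\mathrm{rest}}(D)$, where ``rest'' means the coordinates outside $\bigcup I_j$. On $I_j$, every element of $D$ has the form (after untwisting) $(xz_2,\dots)$ relative to the chosen coordinate, with $z$'s central. Hence the kernel of $\phi$ is a central elementary abelian $2$-group $K\leq Z^{\bigcup I_j}$. That $D\cap Z^{\bigcup I_j}$ need not vanish is the real obstacle. The remedy is to also record, for each block, the central discrepancy: the assignment $x\mapsto$ ($z$-pattern on $I_j$ relative to $i_j$) is a homomorphism $D\to Z^{|I_j|-1}$ once the chosen coordinate is fixed. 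Append these homomorphisms to the ``rest'' part to form $D_0\leq$ (rest) $\times\prod_j Z^{|I_j|-1}$. Then $\phi$ is injective.

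\emph{Step 3: surjectivity and membership.} The subgroup $Q$ maps onto $L^s\times 1$, because the diagonal has trivial discrepancy and trivial rest-coordinates. Since $Q\leq D$, the image of $D$ is $L^s\times D_0$, with $D_0$ the image of the second component. Finally, $D_0\in\mathfrak Q$ holds because $D_0$ is a subdirect subgroup of a product of copies of $A$ (the rest consists of $A$-coordinates only, since supports are unmixed and the $L$-coordinates are exhausted by the $I_j$) and of $Z\iso C_2$, which is a quotient of $A$. Hence $D_0\in\Rop\Qop\{A\}\subseteq\mathfrak Q$. The only genuinely delicate point, which I will check carefully, is that the discrepancy map is a homomorphism. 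This follows since after untwisting, elements of $D$ on $I_j$ lie in $\Delta_{I_j}(L)Z^{I_j}$, and the map to $Z^{I_j}/Z$ given by $(xz_1,\dots,xz_s)\mapsto (z_1z_i)_i$ is well defined because $Z$ is central.
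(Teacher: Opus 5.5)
Your final construction is correct and is essentially the paper's argument. Both choose a reference coordinate in each pure $L$-block and use \eqref{eq:normalizer-mixed} to see that the other coordinates of that block differ from the reference one only by elements of $Z$, so the block copies of $L$ split off and what remains is a subdirect product of copies of $A$ and $C_2$. The difference is only in presentation: the paper takes $D_0$ internally as the subgroup of elements of $D$ with trivial reference coordinates and checks $D=P_L\times D_0$ directly, whereas your map $\phi$ (reference coordinates, $A$-coordinates and central discrepancies) is the external form of the same splitting, and your $D_0$ is the isomorphic image of the paper's.
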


\begin{proof}
By Lemma~\ref{lem:subdirect-mixed}, $D'$ is a direct product of full
twisted diagonal copies of $L$, and every support consists either of
$A$-coordinates only or of $L$-coordinates only. In view of
Lemma~\ref{lem:base}, we may remove all coordinate twists. A diagonal
block whose support $I$ consists of $L$-coordinates will be called a
\emph{pure $L$-block}. For every such support $I$, we fix a reference
coordinate $i_I\in I$. Then \eqref{eq:normalizer-mixed} gives
\[
 N_{L^I}(\Delta_I(L))
 =\Delta_I(L)\times Z^{I\setminus\{i_I\}},
\]
where the elements of the second factor have reference coordinate $1$.

Let $P_L$ be the product of all pure $L$-blocks, and let
\[
 D_0=\{x\in D:\pi_{i_I}(x)=1\text{ for every pure }L\text{-block }I\}.
\]
Then $D_0$ is the intersection of the kernels of some coordinate
homomorphisms. Moreover, $P_L\leq D$ and $P_L\iso L^s$, where $s$ is
the number of pure $L$-blocks. For every $x\in D$, there is a unique
element $p\in P_L$ whose reference coordinates coincide with those of
$x$, and then $p^{-1}x\in D_0$. Hence $D=P_LD_0$ and $P_L\cap D_0=1$.
By the displayed formula for the normalizer, all coordinates of an
element of $D_0$ in a pure $L$-block lie in $Z$. Since $P_L$ is trivial
on all other coordinates, $[P_L,D_0]=1$. Thus $D=P_L\times D_0$.

Multiplication by an element of $P_L$ does not change the
$A$-coordinates. Consequently, $D_0$ still projects onto every
$A$-coordinate, and its projections onto all other coordinates are $1$
or $Z\iso C_2$. Omitting the trivial coordinates, we see that $D_0$ is
a subdirect product of copies of $A$ and $C_2$. Since $C_2$ is a
quotient of $A$, it follows that $D_0\in\mathfrak Q$. Clearly, the
argument also covers the case where there are no pure $L$-blocks. The
lemma is proved.
\end{proof}

\begin{proposition}\label{prop:recovery}
Every non-empty $S_n$-closed subformation $\X\subseteq\F$ satisfies
\begin{equation}\label{eq:recovery}
 \X=(\X\cap\mathfrak Q)\vee(\X\cap\mathfrak R).
\end{equation}
Consequently, the map
\[
 \X\longmapsto(\X\cap\mathfrak Q,\X\cap\mathfrak R)
\]
is an order embedding into the product of the ordinary subformation
lattices of $\mathfrak Q$ and $\mathfrak R$.
\end{proposition}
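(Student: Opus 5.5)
The inclusion $\supseteq$ in \eqref{eq:recovery} is immediate: both $\X\cap\mathfrak Q$ and $\X\cap\mathfrak R$ are subformations of $\X$, and $\X$ is a formation. For the reverse inclusion, let $H\in\X$. Since $\X\subseteq\F$, Lemma~\ref{lem:closure} and \eqref{eq:ambient} give an epimorphism $f:D\epi H$ with $D\sd A^a\times L^b$. By Lemma~\ref{lem:mixed-splitting}, $D=P_L\times D_0$ with $P_L\iso L^s$ and $D_0\in\mathfrak Q$. The aim is to show that the images $f(P_L)$ and $f(D_0)$ lie in $\X\cap\mathfrak R$ and $\X\cap\mathfrak Q$, respectively. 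Then $H=f(P_L)f(D_0)$ is a central-type product of these two commuting normal subgroups, and hence an epimorphic image of $f(P_L)\times f(D_0)$, which lies in the join.

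Since $f(P_L)$ and $f(D_0)$ are normal in $H$ and $\X$ is $S_n$-closed, both belong to $\X$. Moreover, $f(P_L)$ is a quotient of $L^s\in\mathfrak R$, so $f(P_L)\in\X\cap\mathfrak R$. The delicate point is $f(D_0)$. It is a quotient of $D_0\in\mathfrak Q$, so $f(D_0)\in\mathfrak Q$ automatically, since $\mathfrak Q$ is closed under quotients. Hence $f(D_0)\in\X\cap\mathfrak Q$. Finally, $f(P_L)\times f(D_0)\in(\X\cap\mathfrak Q)\vee(\X\cap\mathfrak R)$, and $H$ is its image under multiplication, which is a homomorphism because the two factors commute. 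This gives $\X\subseteq(\X\cap\mathfrak Q)\vee(\X\cap\mathfrak R)$.

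I first suspected that the main obstacle would be that $D_0$ itself need not lie in $\X$. This turns out to be harmless, because we only use images under $f$, and these are normal in $H\in\X$. The only verifications needed are that $f(P_L)$ and $f(D_0)$ are normal in $H$ (they are images of normal subgroups of $D$ under a surjection) and that they commute elementwise (which is inherited from the direct decomposition). The empty-formation case is excluded by hypothesis.

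For the consequence, if $\X\subseteq\mathfrak Y$, then clearly both intersections are contained in the corresponding intersections for $\mathfrak Y$. Conversely, suppose $\X\cap\mathfrak Q\subseteq\mathfrak Y\cap\mathfrak Q$ and $\X\cap\mathfrak R\subseteq\mathfrak Y\cap\mathfrak R$. Applying \eqref{eq:recovery} to both $\X$ and $\mathfrak Y$ and using monotonicity of $\vee$ gives $\X\subseteq\mathfrak Y$. Hence the map reflects and preserves order, and in particular it is injective.
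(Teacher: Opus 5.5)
Your proof is correct and follows essentially the same route as the paper. Both arguments use the splitting $D=P_L\times D_0$ from Lemma~\ref{lem:mixed-splitting}, put the two commuting normal images $f(P_L)\in\mathfrak R$ and $f(D_0)\in\mathfrak Q$ into $\X$ via $S_n$-closedness, and recover $H$ as the image of their direct product under multiplication; the order-embedding argument also matches the paper's, and it only needs \eqref{eq:recovery} for $\X$, not for $\mathfrak Y$.
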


\begin{proof}
Clearly, the right-hand side of \eqref{eq:recovery} is contained in
$\X$. Conversely, let $H\in\X$. By Lemma~\ref{lem:closure}, there is
an epimorphism $f:D\epi H$, where $D\sd A^a\times L^b$. By
Lemma~\ref{lem:mixed-splitting}, $D=P_L\times D_0$, where
$P_L\iso L^s$ and $D_0\in\mathfrak Q$. The subgroups
\[
 R_H=f(P_L),\qquad U_H=f(D_0)
\]
are normal in $H$, they commute, and $H=R_HU_H$. Moreover,
$R_H\in\mathfrak R$ and $U_H\in\mathfrak Q$. Since $\X$ is closed
under taking normal subgroups, both $R_H$ and $U_H$ belong to $\X$.
Therefore $H$, being an epimorphic image of $R_H\times U_H$ under the
multiplication map, belongs to the right-hand side of
\eqref{eq:recovery}. Note that $R_H\cap U_H$ need not be trivial.

Finally, the map $\X\mapsto(\X\cap\mathfrak Q,\X\cap\mathfrak R)$ is
clearly order-preserving. Conversely, if $\X$ and $\mathfrak Y$ are
non-empty $S_n$-closed subformations of $\F$ such that
$\X\cap\mathfrak Q\subseteq\mathfrak Y\cap\mathfrak Q$ and
$\X\cap\mathfrak R\subseteq\mathfrak Y\cap\mathfrak R$, then
$\X\subseteq\mathfrak Y$ by \eqref{eq:recovery}. The proposition is
proved.
\end{proof}

\begin{corollary}\label{cor:chain-recovery}
For every $k\geq1$, put $\mathfrak E_k=\C_k\cap\mathfrak Q$. Then
\begin{equation}\label{eq:chain-recovery}
 \C_k=\mathfrak E_k\vee\mathfrak R=\sform(\mathfrak E_k).
\end{equation}
\end{corollary}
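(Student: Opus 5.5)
Since $\C_k$ is $S_n$-closed, the plan is to apply Proposition~\ref{prop:recovery} to it and then show that its $\mathfrak R$-part is all of $\mathfrak R$. The chain of inclusions we want is
\[
 \C_k=\mathfrak E_k\vee(\C_k\cap\mathfrak R)\subseteq\mathfrak E_k\vee\mathfrak R\subseteq\sform(\mathfrak E_k)\subseteq\C_k .
\]
With this chain, all three formations in \eqref{eq:chain-recovery} coincide.

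\emph{First equality.} Proposition~\ref{prop:recovery} applies because $\C_k$ is a non-empty $S_n$-closed subformation of $\F$ (Proposition~\ref{prop:formations}). It gives $\C_k=(\C_k\cap\mathfrak Q)\vee(\C_k\cap\mathfrak R)=\mathfrak E_k\vee(\C_k\cap\mathfrak R)$. The second formation here is contained in $\mathfrak R$, which yields the first inclusion of the chain.

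\emph{Second inclusion.} We need $\mathfrak R\subseteq\sform(\mathfrak E_k)$; note that $\mathfrak E_k\subseteq\sform(\mathfrak E_k)$ trivially. Since $\mathfrak R=\form(L)$, it suffices to show $L\in\sform(\mathfrak E_k)$, that is, to find a group in $\mathfrak E_k$ having a subnormal subgroup isomorphic to $L$. Take $A$. It lies in $\mathfrak Q$, it has the unique component $L$, and $\lambda_L$ is nonzero. Then $\mu_k^A(t,\ldots,t)=z\ne1$, so $A\notin\C_k$ and $A$ itself is unsuitable. Instead take $H_{k+1}\in\C_k\cap\mathfrak Q=\mathfrak E_k$ (Lemma~\ref{lem:witness} and \eqref{eq:Hd-member}). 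By the proof of Lemma~\ref{lem:witness}, each $L_\lambda$ embeds in $H_{k+1}$ as a normal subgroup isomorphic to $L$. Hence $L\in\Sn\mathfrak E_k\subseteq\sform(\mathfrak E_k)$, and therefore $\mathfrak R\subseteq\sform(\mathfrak E_k)$. This step is the only non-formal point of the argument: we must exhibit a copy of $L$ as a subnormal subgroup of some member of $\mathfrak E_k$, and the groups $H_{k+1}$ supply one uniformly in $k$.

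\emph{Third inclusion.} We have $\mathfrak E_k\subseteq\C_k$, and $\C_k$ is an $S_n$-closed formation. Hence $\sform(\mathfrak E_k)\subseteq\C_k$, which closes the chain and proves \eqref{eq:chain-recovery}.
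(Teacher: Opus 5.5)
Your proof is correct and follows the paper's argument. Both use Proposition~\ref{prop:recovery} applied to $\C_k$, the group $H_{k+1}\in\mathfrak E_k$ as a source of a normal copy of $L$, and the $S_n$-closedness of $\C_k$; the paper differs only in first checking $L\in\C_k$ directly (the unique component of $L$ has zero outer-action character), whereas your cyclic chain of inclusions yields $\mathfrak R\subseteq\sform(\mathfrak E_k)\subseteq\C_k$ automatically and so makes that check unnecessary.
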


\begin{proof}
The only component of $L$ is $L$ itself, and its outer-action
character is zero. Hence $L\in\C_k$, and so $\mathfrak R\subseteq\C_k$. Therefore the first equality follows from
Proposition~\ref{prop:recovery}. Next, the group $H_{k+1}$ belongs to
$\mathfrak E_k$ and has a normal component isomorphic to $L$. Hence
$\mathfrak R\subseteq\sform(\mathfrak E_k)$, and so
$\mathfrak E_k\vee\mathfrak R\subseteq\sform(\mathfrak E_k)$. The
reverse inclusion holds since $\C_k$ is $S_n$-closed and contains
$\mathfrak E_k$. The corollary is proved.
\end{proof}

In the proof of the first assertion of Theorem~\ref{thm:further}, we
use the known description of the lattice of all subformations of
$\mathfrak Q$. For $\mathfrak R$, we need only its $S_n$-closed
subformations, which are described directly in the following lemma.
This description agrees with the six-element lattice of all
subformations of $\mathfrak R$ given in \cite[Section~3]{BBH}.

\begin{lemma}\label{lem:pure-L-closed}
The formation $\mathfrak R$ is $S_n$-closed. Its non-empty $S_n$-closed
subformations are exactly
\begin{equation}\label{eq:pure-L-list}
 \form(1),\quad\form(C_2),\quad\form(A_5),\quad
 \form(A_5\times C_2),\quad\mathfrak R.
\end{equation}
Moreover, if $H\in\mathfrak R$, then every $\lambda_E$, for
$E\in\Comp(H)$, is zero.
\end{lemma}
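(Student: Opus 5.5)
We prove the three assertions of Lemma~\ref{lem:pure-L-closed} in the order: the outer-action characters, then $S_n$-closedness of $\mathfrak R$, then the list \eqref{eq:pure-L-list}.

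\emph{Outer-action characters.} Let $H\in\mathfrak R$. By Lemma~\ref{lem:closure} there is an epimorphism $f:D\epi H$ with $D\sd L^b$. Lemma~\ref{lem:subdirect-mixed} (with $a=0$) and \eqref{eq:normalizer-mixed} give $D=D'Z^b\cap D$. Indeed, every block normalizer is $\Delta_I(L)Z^I$, so every element of $D$ acts on each diagonal block $P_j$ by an inner automorphism. By the proof of Proposition~\ref{prop:structure}, the components of $H$ are the non-trivial images $f(P_j)$. Conjugation by $f(x)$ on $f(P_j)$ is the image of an inner automorphism of $P_j$, hence inner. Therefore $\lambda_E=0$ for every $E\in\Comp(H)$.

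\emph{$S_n$-closedness.} By Lemma~\ref{lem:closure}, $\sform(L)=\form(T)$, where $T$ is the direct product of the subnormal subgroups of $L$. The argument of Lemma~\ref{lem:ambient-closed} shows these are $1$, $Z$ and $L$: a subnormal $U$ has image $1$ or $A_5$ in $L/Z$, so $U\le Z$ or $U=L$. Since $Z\iso C_2$ is not an obvious quotient of $L$ (as $L$ is perfect), we need $C_2\in\mathfrak R$ separately. For this, take the diagonal subdirect product $\{(x,xz^{a})\}\leq L^2$, with $x\in L$ and $a\in\{0,1\}$. It equals $\Delta(L)\times(1\times Z)$, so it is isomorphic to $L\times C_2$, and $C_2$ is a quotient of it. Hence $T\in\mathfrak R$ and $\sform(L)=\mathfrak R$.

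\emph{Classification.} Let $\X\subseteq\mathfrak R$ be a non-empty $S_n$-closed subformation, and let $H\in\X$. By Proposition~\ref{prop:structure}, $H/H'$ is elementary abelian and the components of $H$ are normal, isomorphic to $L$ or $A_5$. Both $H/H'$ and these components are quotients or normal subgroups of $H$, so they lie in $\X$. We split into cases.
\begin{enumerate}
\item If every $H\in\X$ is abelian, then $\X$ is $\form(1)$ or $\form(C_2)$.
\item If some component is isomorphic to $A_5$ but none to $L$ occurs, then $A_5\in\X$. Whether $C_2\in\X$ decides between $\form(A_5)$ and $\form(A_5\times C_2)$.
\item If some component is isomorphic to $L$, then $L\in\X$, and so $\X=\mathfrak R$.
\end{enumerate}
For the inclusion direction in case (ii), we need a group with only $A_5$-components to lie in $\form(A_5,C_2)$. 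Each such $H$ is, by the structure from the first step, $H'\times$ (something)? More precisely, $H'$ is a direct product of copies of $A_5$. Every element of $H$ acts on each component by an inner automorphism, so $H=H'C_H(H')$, where $C_H(H')=Z(H)$ is an elementary abelian $2$-group. This gives $H\iso A_5^m\times C_2^r$.

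\emph{Main obstacle.} The delicate step is the case split itself: showing that every group in $\X$ lies in the formation generated by its components together with $C_2$. The plan here is to use the recovery-style decomposition from Lemma~\ref{lem:mixed-splitting} with $a=0$: $D=P_L\times D_0$, where $P_L\iso L^s$ and $D_0$ is elementary abelian. Thus $H$ is a quotient of $L^s\times C_2^r$ through the images $f(P_L)$ and $f(D_0)$. Now $f(P_L)$ is a normal subgroup of $H$, hence lies in $\X$. It is a central product of the components, so it is a quotient of the direct product of the components, and that direct product lies in the formation the components generate. Therefore $H\in\form(\Comp(H)\cup\{C_2\})$ whenever $f(D_0)\neq1$, and $H\in\form(\Comp(H))$ otherwise. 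This reduces the list to checking which of $1$, $C_2$, $A_5$, $L$ belong to $\X$, using $A_5=L/Z$. One also confirms that the five formations in \eqref{eq:pure-L-list} are $S_n$-closed: each is generated by groups whose subnormal subgroups are already in the list.
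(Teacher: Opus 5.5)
Your proof is correct and follows essentially the same route as the paper: $C_2\in\mathfrak R$ via $\Delta(L)Z^2\sd L^2$, the subnormal subgroups $1,Z,L$ of $L$, the fact that every $D\sd L^b$ lies in $D'Z^b$ (so all outer-action characters vanish and $H=H'W$ with $W$ central), and the case split on whether some component is isomorphic to $L$, which gives $H\iso A_5^m\times C_2^r$ otherwise. The only divergence is the final check that the first four formations are $S_n$-closed: you use the criterion that $\form(G)=\sform(G)$ whenever $\Sn\{G\}\subseteq\form(G)$ (by Lemma~\ref{lem:closure}), which is valid and shorter than the paper's identification of $\form(A_5\times C_2)$ with the class of all $A_5^r\times C_2^s$ followed by an analysis of normal subgroups --- but you should state it in that form, since ``subnormal subgroups already in the list'' is not by itself an argument.
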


\begin{proof}
The subgroup $\Delta(L)Z^2$ is subdirect in $L^2$, and its quotient
by $\Delta(L)$ is isomorphic to $C_2$. Hence $C_2\in\mathfrak R$.
The subnormal subgroups of $L$ are $1$, $Z$ and $L$, as one sees by
passing to $L/Z\iso A_5$ and using the perfectness of $L$. Thus
$\sform(L)=\mathfrak R$ by Lemma~\ref{lem:closure}.

Let $D\sd L^b$. Then, by the proof of Lemma~\ref{lem:mixed-splitting}
with $a=0$, $D=P_L\times C$, where $P_L\iso L^s$ and $C$ is a central
elementary abelian $2$-group. By Lemma~\ref{lem:closure}, every
$H\in\mathfrak R$ is an epimorphic image of such a group $D$, and so
$H=H'W$, where $W$ is a central elementary abelian
$2$-group and $H'$ is a central product of the images of the factors
of $P_L$. Therefore conjugation induces inner automorphisms on every
component of $H$, which proves the last assertion.

Now let $\mathfrak Y\subseteq\mathfrak R$ be a non-empty $S_n$-closed
formation. If some group in $\mathfrak Y$ has a component isomorphic to
$L$, then $L\in\mathfrak Y$, and so $\mathfrak Y=\mathfrak R$.
Otherwise, for every $H\in\mathfrak Y$, all components of $H$ are
isomorphic to $A_5$. Since these components have trivial centres,
$H'\iso A_5^r$ and $H'\cap W=1$. Hence
\[
 H\iso A_5^r\times C_2^s.
\]
Depending on which of these two types of direct factors occur, and
using projections and direct products, we see that $\mathfrak Y$ is
one of the first four formations in \eqref{eq:pure-L-list}.

It remains to note that the class of all groups $A_5^r\times C_2^s$
coincides with $\form(A_5\times C_2)$. To check closure under
subdirect products, project a subdirect subgroup onto its
$A_5$-coordinates. The image is a product of diagonal simple factors
and is perfect. The derived subgroup has the same image and is trivial
on the $C_2$-coordinates; the remaining factor is a central elementary
abelian $2$-group. Closure under quotients follows from the structure
of normal subgroups of a direct product of non-abelian simple groups
and a central elementary abelian group. The same arguments apply when
one type of factor is absent. The normal subgroups of
$A_5^r\times C_2^s$ are products of some of its simple direct factors
and a subgroup of its central factor. Iterating along a subnormal
chain, we see that each of the first four formations in
\eqref{eq:pure-L-list} is $S_n$-closed. The lemma is proved.
\end{proof}

\begin{lemma}\label{lem:finite-cover}
Let $\mathfrak U$ be a formation. Suppose that there are finitely many
proper subformations $\mathfrak M_1,\ldots,\mathfrak M_r$, with $r\geq1$, such that
every proper subformation of $\mathfrak U$ is contained in one of
them. Then $\mathfrak U$ is generated by a single finite group.
The trivial formation has the same property.
\end{lemma}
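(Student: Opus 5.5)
The idea is to exhibit, for each maximal candidate $\mathfrak M_i$, a group in $\mathfrak U$ outside it, and then take the direct product of these witnesses. Since every $\mathfrak M_i$ is a proper subformation of $\mathfrak U$, for each $i=1,\ldots,r$ we can choose a finite group $G_i\in\mathfrak U\setminus\mathfrak M_i$. Put
\[
 G=G_1\times\cdots\times G_r .
\]
Because $\mathfrak U$ is a formation, it is closed under finite subdirect products, in particular under finite direct products. Hence $G\in\mathfrak U$, and so $\form(G)\subseteq\mathfrak U$.

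It remains to show that $\form(G)$ is not proper. Suppose, to the contrary, that $\form(G)\subsetneq\mathfrak U$. By hypothesis, $\form(G)\subseteq\mathfrak M_i$ for some $i$. Each $G_i$ is an epimorphic image of $G$ via the coordinate projection, so
\[
 G_i\in\form(G)\subseteq\mathfrak M_i .
\]
This contradicts the choice of $G_i$. Therefore $\form(G)=\mathfrak U$, and $\mathfrak U$ is generated by the single finite group $G$.

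For the final sentence, the trivial formation $\form(1)$ is generated by the identity group, so there is nothing to prove in that case. The case of $\mathfrak U$ with no proper non-empty subformations is covered in the same way: if the only proper subformation is $\form(1)$, we may take $r=1$ and $\mathfrak M_1=\form(1)$, and the argument above applies. There is no real obstacle here. The one point to state explicitly is that $r\geq1$ ensures $G$ is a genuine finite direct product, and that $\form(G)$ being proper places it under the hypothesis.
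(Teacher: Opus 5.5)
Your proof is correct and follows essentially the same route as the paper's: choose $G_i\in\mathfrak U\setminus\mathfrak M_i$, set $G=G_1\times\cdots\times G_r$, and observe that a proper $\form(G)$ would lie in some $\mathfrak M_i$, which is impossible because $G_i$ is a quotient of $G$. The trivial formation is handled in both by noting that it equals $\form(1)$.
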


\begin{proof}
For every $i$, choose $G_i\in\mathfrak U\setminus\mathfrak M_i$, and
let $G=\prod_{i=1}^rG_i$. If $\form(G)$ were a proper subformation of
$\mathfrak U$, then it would be contained in some $\mathfrak M_i$.
But $G_i$ is a quotient of $G$, a contradiction. Thus
$\mathfrak U=\form(G)$. Finally, the trivial formation is $\form(1)$.
The lemma is proved.
\end{proof}

Now we state the part of Burichenko's classification which is used
below. In the notation of \cite[Theorem~1, p.~16]{Burichenko2013}, the
distinct non-empty subformations of $\mathfrak Q$ are
\[
 \form(1),\quad\form(C_2),\quad\mathfrak P=\form(S_5),\quad
 \mathfrak K_*,\quad\mathfrak K_i,\quad\mathfrak H_i\quad(i\geq0).
\]
Here $\mathfrak K_*$ denotes the formation which is denoted by
$\overline{\mathfrak K}$ in \cite{Burichenko2013}. All strict
inclusions between these formations are given, up to transitivity, by
\begin{equation}\label{eq:known-lattice}
 \begin{gathered}
 \form(1)\subsetneq\form(C_2)\subsetneq\mathfrak P
     \subsetneq\mathfrak K_*,\qquad \mathfrak K_0=\mathfrak Q,\\
 \mathfrak K_*\subsetneq\mathfrak K_i,\qquad
 \mathfrak K_*\subsetneq\mathfrak H_i\quad(i\geq0),\\
 \mathfrak K_j\subsetneq\mathfrak K_i,\qquad
 \mathfrak H_j\subsetneq\mathfrak H_i\quad(i<j),\qquad
 \mathfrak H_j\subsetneq\mathfrak K_i\quad(i\leq j).
 \end{gathered}
\end{equation}
In particular, no $\mathfrak K_j$ is contained in an $\mathfrak H_i$.
Note that \eqref{eq:known-lattice} is taken from
\cite{Burichenko2013}; it is not deduced from the results of
Section~\ref{sec:invariants}.

\begin{corollary}\label{cor:Q-one-generated}
Every non-empty subformation of $\mathfrak Q$ is generated by a
single finite group.
\end{corollary}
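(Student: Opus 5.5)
We apply Lemma~\ref{lem:finite-cover} to each non-empty subformation $\mathfrak U$ of $\mathfrak Q$ in turn, using the lattice \eqref{eq:known-lattice}. In each case we check that the proper subformations of $\mathfrak U$ are all contained in finitely many of them. For $\form(1)$ and $\form(C_2)$ nothing is needed, since they are one-generated by definition. For $\mathfrak P=\form(S_5)$ the statement is likewise immediate. Every formation that is generated by a listed formation's name as $\form(G)$ is also immediate, and this includes $\mathfrak K_0=\mathfrak Q=\form(A)$.

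For $\mathfrak K_*$, the proper subformations are $\form(1)\subsetneq\form(C_2)\subsetneq\mathfrak P$. Every proper subformation therefore lies in the single maximal one, $\mathfrak P$. Lemma~\ref{lem:finite-cover} with $r=1$ then gives a generator.

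For $\mathfrak H_i$, we read off from \eqref{eq:known-lattice} that its proper subformations are $\form(1)$, $\form(C_2)$, $\mathfrak P$, $\mathfrak K_*$ and $\mathfrak H_j$ with $j>i$. No $\mathfrak K_j$ appears, because no $\mathfrak K_j$ lies in any $\mathfrak H_i$. All of these are contained in $\mathfrak H_{i+1}$, since $\mathfrak K_*\subsetneq\mathfrak H_{i+1}$ and $\mathfrak H_j\subseteq\mathfrak H_{i+1}$ for $j\ge i+1$. So $\mathfrak H_{i+1}$ is the unique maximal proper subformation, and the lemma applies with $r=1$.

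For $\mathfrak K_i$, the proper subformations are the bottom four together with $\mathfrak K_j$ for $j>i$ and $\mathfrak H_j$ for $j\ge i$. Each of these lies in $\mathfrak K_{i+1}$ or in $\mathfrak H_i$, because $\mathfrak H_j\subseteq\mathfrak H_i$ for $j\ge i$ and $\mathfrak K_j\subseteq\mathfrak K_{i+1}$ for $j\ge i+1$. The lemma then applies with $r=2$.

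The main obstacle is reading \eqref{eq:known-lattice} correctly as a complete list of inclusions ``up to transitivity''. In particular we must confirm that no inclusion $\mathfrak H_j\subseteq\mathfrak K_i$ holds for $j<i$, and that no other unexpected comparabilities exist. Only this makes the covering sets above complete. If the transitive closure turned out to contain additional inclusions, it would only shrink the sets of proper subformations, so the argument would still go through.
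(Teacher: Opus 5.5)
Your proposal is correct and is essentially the paper's proof: apply Lemma~\ref{lem:finite-cover} with the cover $\{\mathfrak H_{i+1}\}$ for $\mathfrak H_i$, the cover $\{\mathfrak K_{i+1},\mathfrak H_i\}$ for $\mathfrak K_i$, and the greatest proper subformation for the non-trivial members of the bottom chain. One small correction to your closing aside: additional inclusions would enlarge, not shrink, the sets of proper subformations, and they could then escape your covers; this is why completeness of \eqref{eq:known-lattice} up to transitivity is needed, and that completeness is exactly what the cited classification asserts.
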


\begin{proof}
Every proper subformation of $\mathfrak H_i$ is contained in
$\mathfrak H_{i+1}$, and every proper subformation of $\mathfrak K_i$
is contained in $\mathfrak K_{i+1}$ or in $\mathfrak H_i$. Moreover,
each non-trivial member of the finite chain at the bottom of
\eqref{eq:known-lattice} has a greatest proper subformation. Hence
Lemma~\ref{lem:finite-cover} applies in each case. The corollary is
proved.
\end{proof}

\begin{proof}[Proof of Theorem~\ref{thm:further}]
Let $\X$ be a non-empty $S_n$-closed subformation of $\F$. By
Corollary~\ref{cor:Q-one-generated},
$\X\cap\mathfrak Q=\form(U)$ for some group $U\in\mathfrak Q$. Since
both $\X$ and $\mathfrak R$ are $S_n$-closed, so is
$\X\cap\mathfrak R$; hence, by Lemma~\ref{lem:pure-L-closed},
$\X\cap\mathfrak R=\form(V)$, where
\begin{equation}\label{eq:five-generators}
 V\in\{1,C_2,A_5,A_5\times C_2,L\}.
\end{equation}
Now Proposition~\ref{prop:recovery} gives
\begin{equation}\label{eq:one-generator-all}
 \X=\form(U)\vee\form(V)=\form(U\times V).
\end{equation}
Since there are only countably many isomorphism types of finite
groups, there are at most countably many such formations. On the
other hand, the strictly descending chain in Theorem~\ref{thm:main}
shows that there are infinitely many of them. Hence their number is
countably infinite.

The equalities for $\C_k$ are proved in
Corollary~\ref{cor:chain-recovery}. More explicitly, if
$\mathfrak E_k=\form(U_k)$, then
\begin{equation}\label{eq:chain-generator}
 \C_k=\form(U_k\times L).
\end{equation}
Note that the groups $U$ and $U_k$ are obtained here by existence
arguments only; in particular, we do not assert that one can take
$U_k=H_{k+1}$. The theorem is proved.
\end{proof}

\appendix
\section{The intersection of the descending chain}\label{app:intersection}

In this appendix we use the description \eqref{eq:known-lattice} of
the lattice of subformations of $\mathfrak Q$. We retain its notation
$\mathfrak P$ and $\mathfrak K_*$, and let
\[
 \C_\infty=\bigcap_{k\geq1}\C_k.
\]

\begin{lemma}\label{lem:S5-derived}
If $H\in\form(S_5)$, then $H'$ is a direct product of copies of
$A_5$. In particular, $Z(H')=1$.
\end{lemma}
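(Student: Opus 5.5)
We prove the lemma by mirroring the argument of Lemma~\ref{lem:subdirect-mixed}, with $S_5$ in place of $A$. By Lemma~\ref{lem:closure}, every $H\in\form(S_5)$ is an epimorphic image $f(D)$ of some $D\sd S_5^n$. Since the image of the derived subgroup is the derived subgroup of the image, $H'=f(D')$. The plan is therefore to show first that $D'$ is a direct product of copies of $A_5$ whose supports partition the coordinates. After that we show that images of such groups under epimorphisms restricted from $D$ are again direct products of copies of $A_5$.

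\emph{Step 1: the structure of $D'$.} The projection of $D'$ onto every coordinate is $S_5'=A_5$, so $D'\sd A_5^n$. By the result quoted in the proof of Lemma~\ref{lem:subdirect-L} (\cite[Lemma~2.1]{BryantFoy1995}), a subdirect subgroup of $A_5^n$ is a direct product of full twisted diagonal copies $Q_1,\ldots,Q_r$ of $A_5$ on disjoint supports covering all coordinates. In particular $D'\iso A_5^r$. No central extension issue arises here, unlike in Lemma~\ref{lem:subdirect-L}, because we work directly in $A_5^n$ rather than in $L^n$. We do not need the fact that $D/D'$ is elementary abelian, nor any information about normalizers. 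Only the isomorphism type of $D'$ matters.

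\emph{Step 2: passing to the quotient.} Let $N=\ker f\cap D'$. Then $N$ is normal in $D'\iso A_5^r$, so by \cite[Lemma~2.2]{BryantFoy1995} $N$ is the product of some of the factors $Q_j$. Hence
\[
H'=f(D')\iso D'/N\iso A_5^{r'}
\]
for some $r'\leq r$, which is the first assertion. Since a direct product of non-abelian simple groups has trivial centre, $Z(H')=1$ follows immediately.

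I expect no real obstacle here. The only point requiring care is Step 2: we must apply the structure of normal subgroups of $A_5^r$ to $\ker f\cap D'$, which is normal in $D'$. This holds because it is the intersection of two normal subgroups of $D$, and normality in $D$ implies normality in $D'$. With that observation the proof is complete.
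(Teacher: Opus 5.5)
Your proof is correct and matches the paper's own argument. Both write $H=f(D)$ with $D\sd S_5^n$, use the subdirect-product lemma of Bryant and Foy (Lemma~2.1) to get $D'\sd A_5^n$ and hence $D'\iso A_5^r$, and then use their normal-subgroup lemma (Lemma~2.2) to conclude that $H'=f(D')$ is again a direct product of copies of $A_5$, so $Z(H')=1$. Your remark that $\ker f\cap D'$ is normal in $D'$ only makes explicit a step the paper leaves implicit.
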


\begin{proof}
By Lemma~\ref{lem:closure}, $H$ is an epimorphic image of some
$D\sd S_5^n$. Then $D'\sd A_5^n$. Since a subdirect product of copies
of a non-abelian simple group is a direct product of full twisted
diagonal copies of this group \cite[Lemma~2.1]{BryantFoy1995}, we have
$D'\iso A_5^r$. Every quotient of $A_5^r$ is again a direct product of
some of its simple factors \cite[Lemma~2.2]{BryantFoy1995}. Since $H'$
is the image of $D'$, the lemma is proved.
\end{proof}

Let
\begin{equation}\label{eq:J-limit}
 J_\infty=(L\circ L)\rtimes\langle t\rangle,
 \qquad
 L\circ L=(L\times L)/\langle(z,z)\rangle,
\end{equation}
where $t^2=1$ and $t$ induces $\theta$ on both copies of $L$.
This action is well defined since $\theta$ fixes $z$.

\begin{proposition}\label{prop:limit}
One has
\begin{equation}\label{eq:limit-formula}
 \mathfrak K_*=\form(J_\infty),\qquad
 \C_\infty=\form(J_\infty\times L)=\sform(\mathfrak K_*).
\end{equation}
Moreover, $|J_\infty|=14400$ and $|J_\infty\times L|=1\,728\,000$.
\end{proposition}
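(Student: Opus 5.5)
The order computations are immediate: $|L\circ L|=120^2/2=7200$, so $|J_\infty|=14400$ and $|J_\infty\times L|=14400\cdot120=1\,728\,000$. The plan has three steps. First locate $J_\infty$ inside $\mathfrak Q$ and compute its maps $\mu_k$. Then use Burichenko's lattice \eqref{eq:known-lattice} together with the strictly descending chain $\mathfrak E_k$ of Remark~\ref{rem:closure-comparison} to identify $\C_\infty\cap\mathfrak Q$. Finally pass to $\C_\infty$ by Proposition~\ref{prop:recovery}.

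\emph{Step 1: $J_\infty\in\C_\infty\cap\mathfrak Q$ and $J_\infty\notin\mathfrak P$.}
Consider
\[
 D=\{((x,t^e),(y,t^e)) : x,y\in L,\ e\in\{0,1\}\}\leq A\times A .
\]
This is a subgroup isomorphic to $(L\times L)\rtimes\langle t\rangle$, with $t$ acting diagonally by $\theta$, and it is subdirect in $A^2$. The subgroup $\langle(z,z)\rangle$ is central in $D$, and $D/\langle(z,z)\rangle\iso J_\infty$; hence $J_\infty\in\mathfrak Q$.

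Next we identify the components. As in the proof of Lemma~\ref{lem:witness}, the images of the two copies of $L$ are the components of $J_\infty$. They are normal, isomorphic to $L$, and have the same central involution $z_*$. Moreover $J_\infty/J_\infty'\iso C_2$, and both components have the same outer-action character $\lambda$, namely the projection onto $\langle t\rangle$. Therefore
\[
 \mu_k^{J_\infty}=z_*^{2\lambda(h_1)\cdots\lambda(h_k)}=1\qquad\text{for every }k,
\]
so $J_\infty\in\C_\infty$.

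Finally, $J_\infty'=L\circ L$ has centre $\langle z_*\rangle\neq1$. By Lemma~\ref{lem:S5-derived} this gives $J_\infty\notin\form(S_5)=\mathfrak P$.

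\emph{Step 2: $\C_\infty\cap\mathfrak Q=\mathfrak K_*=\form(J_\infty)$.}
The formation $\C_\infty\cap\mathfrak Q$ is a subformation of $\mathfrak Q$ contained in every $\mathfrak E_k$. By Remark~\ref{rem:closure-comparison} the $\mathfrak E_k$ are pairwise distinct, so $\C_\infty\cap\mathfrak Q$ lies below infinitely many distinct subformations of $\mathfrak Q$.

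From \eqref{eq:known-lattice}, each $\mathfrak H_i$ has only finitely many subformations containing it: the $\mathfrak H_j$ and $\mathfrak K_j$ with $j\leq i$. Similarly, the only subformations containing $\mathfrak K_i$ are the $\mathfrak K_j$ with $j\leq i$. So $\C_\infty\cap\mathfrak Q$ is none of the $\mathfrak H_i,\mathfrak K_i$. It must therefore be one of $\form(1)$, $\form(C_2)$, $\mathfrak P$, $\mathfrak K_*$. Since it contains $J_\infty\notin\mathfrak P$, it equals $\mathfrak K_*$.

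Now $\form(J_\infty)\subseteq\mathfrak K_*$. The subformations of $\mathfrak K_*$ form the chain $\form(1)\subsetneq\form(C_2)\subsetneq\mathfrak P\subsetneq\mathfrak K_*$, and $\form(J_\infty)\not\subseteq\mathfrak P$. Hence $\form(J_\infty)=\mathfrak K_*$.

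This step is the main obstacle. It is where Burichenko's lattice is used, and one must read off from \eqref{eq:known-lattice} exactly which members have only finitely many overformations in $\mathfrak Q$.

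\emph{Step 3: the formula for $\C_\infty$.}
The class $\C_\infty$ is an intersection of $S_n$-closed formations, hence an $S_n$-closed formation. It contains $\mathfrak R$, since $L\in\C_k$ for every $k$ by Corollary~\ref{cor:chain-recovery}. Proposition~\ref{prop:recovery} then gives
\[
 \C_\infty=\mathfrak K_*\vee\mathfrak R=\form(J_\infty\times L).
\]
For the second equality, first $\sform(\mathfrak K_*)\subseteq\C_\infty$, because $\C_\infty$ is $S_n$-closed and contains $\mathfrak K_*$. Conversely, $\sform(\mathfrak K_*)$ contains a normal component of $J_\infty$ isomorphic to $L$, so it contains $\mathfrak R$. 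It also contains $\mathfrak K_*$, hence it contains $\mathfrak K_*\vee\mathfrak R=\C_\infty$. This proves \eqref{eq:limit-formula}.
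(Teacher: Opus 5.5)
Your proof is correct and follows essentially the paper's argument: $J_\infty\in\mathfrak Q$ via the subdirect subgroup of $A\times A$, the two equal contributions to $\mu_k$ cancel, $J_\infty\notin\mathfrak P$ by Lemma~\ref{lem:S5-derived}, Burichenko's lattice gives $\C_\infty\cap\mathfrak Q=\mathfrak K_*$, and Proposition~\ref{prop:recovery} gives $\C_\infty$. The one small variation is in Step 2: the paper shows directly that every infinite strictly descending chain in \eqref{eq:known-lattice} has meet $\mathfrak K_*$, whereas you use that each $\mathfrak H_i$ and $\mathfrak K_i$ has only finitely many overformations and then use $J_\infty\notin\mathfrak P$ to single out $\mathfrak K_*$ among the bottom four, which is equally valid.
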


\begin{proof}
First we show that every infinite strictly descending chain in the
lattice \eqref{eq:known-lattice} has intersection $\mathfrak K_*$.
Indeed, such a chain contains no member of the finite chain at the
bottom of \eqref{eq:known-lattice}. Before it reaches some
$\mathfrak H_i$, it consists of formations $\mathfrak K_i$ with
strictly increasing indices; after that, it consists only of
formations $\mathfrak H_i$ with strictly increasing indices. If it
never reaches an $\mathfrak H_i$, then the indices of its members
$\mathfrak K_i$ are unbounded. In either case, \eqref{eq:known-lattice}
shows that $\mathfrak K_*$ is a lower bound of the chain, and that no
formation strictly above $\mathfrak K_*$ is a lower bound. Applying
this to the chain $\mathfrak E_k=\C_k\cap\mathfrak Q$, we get
\[
 \C_\infty\cap\mathfrak Q
 =\bigcap_{k\geq1}\mathfrak E_k=\mathfrak K_*.
\]
The formation $\C_\infty$ is $S_n$-closed and contains $\mathfrak R$.
Hence, applying Proposition~\ref{prop:recovery} to $\C_\infty$, we
obtain
\begin{equation}\label{eq:limit-recovery}
 \C_\infty=\mathfrak K_*\vee\mathfrak R.
\end{equation}

Now consider the subgroup
\[
 \{(x t^\varepsilon,y t^\varepsilon):x,y\in L,\
       \varepsilon\in\{0,1\}\}\sd A\times A.
\]
It is isomorphic to $(L\times L)\rtimes\langle t\rangle$, and its
quotient by
$\langle(z,z)\rangle$ is $J_\infty$. Hence $J_\infty\in\mathfrak Q$.
The two normal components of $J_\infty$ are isomorphic to $L$, they
have the same non-trivial central involution, and they have the same
outer-action character. Therefore their contributions to every $\mu_k$
cancel, and so $J_\infty\in\C_\infty\cap\mathfrak Q=\mathfrak K_*$.

The derived subgroup of $J_\infty$ is $L\circ L$, and it has a
non-trivial central involution. Therefore $J_\infty\notin\mathfrak P$
by Lemma~\ref{lem:S5-derived}. Since $\mathfrak P$ is the greatest
proper subformation of $\mathfrak K_*$ in \eqref{eq:known-lattice}, it
follows that $\form(J_\infty)=\mathfrak K_*$. Together with
\eqref{eq:limit-recovery}, this gives $\C_\infty=\form(J_\infty\times L)$.

Finally, each component of $J_\infty$ is isomorphic to $L$ and normal
in $J_\infty$, so
$\mathfrak R\subseteq\sform(\mathfrak K_*)$. Conversely, $\C_\infty$
is $S_n$-closed and contains $\mathfrak K_*$. These inclusions prove
the remaining equality in \eqref{eq:limit-formula}. The orders follow
from $|L\circ L|=120^2/2$ and $|\langle t\rangle|=2$. The proposition
is proved.
\end{proof}

\begin{remark}\label{rem:limit-scope}
Note that $J_\infty$ is an explicit generator of $\mathfrak K_*$, but
we do not claim that it is a generator of the smallest possible order.
We also do not determine the positions of the formations
$\mathfrak E_k$ in the lattice \eqref{eq:known-lattice}; they are not
needed for the computation of $\C_\infty$.
\end{remark}

\section{The hypothesis of \texorpdfstring{$S_n$}{Sn}-closedness in the
recovery formula}\label{app:boundary}

In this appendix we give an ordinary subformation of $\F$ for which
\eqref{eq:recovery} fails. For every $H\in\F$, we define the central
elements
\begin{equation}\label{eq:boundary-elements}
 \nu_0(H)=\prod_{E\in\Comp(H)}c_E,
 \qquad
 \gamma(H)=\prod_{\substack{E\in\Comp(H)\\\lambda_E=0}}c_E.
\end{equation}
Here $\lambda_E=0$ means that $\lambda_E$ is identically zero, and the
empty product is equal to $1$.

\Needspace{14\baselineskip}
\begin{lemma}\label{lem:boundary-natural}
For every epimorphism $f:H\epi J$ between groups in $\F$,
\[
 f(\nu_0(H))=\nu_0(J),\qquad f(\gamma(H))=\gamma(J).
\]
Consequently, each of
\[
 \mathfrak U_0=\{H\in\F:\nu_0(H)=1\},\qquad
 \mathfrak U_{\mathrm{in}}=\{H\in\F:\gamma(H)=1\}
\]
is an ordinary formation. Also, $\gamma(H)=1$ for $H\in\mathfrak Q$,
and $\gamma(H)=\nu_0(H)$ for $H\in\mathfrak R$.
\end{lemma}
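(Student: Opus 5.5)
The naturality claims follow the pattern of the proof of Lemma~\ref{lem:epimorphism}. We will reuse the correspondence between components established there. For an epimorphism $f:H\epi J$, each $E\in\Comp(H)$ has image $f(E)$ that is trivial, isomorphic to $A_5$, or isomorphic to $L$ via $f|_E$. Distinct components with non-trivial images have distinct images, and these images exhaust $\Comp(J)$. In the case $f(E)\iso L$, the restriction $f|_E$ is an isomorphism carrying $c_E$ to $c_{f(E)}$, and $\lambda_{f(E)}(f(h))=\lambda_E(h)$ for all $h\in H$. Since $f$ is surjective, $\lambda_{f(E)}$ is identically zero if and only if $\lambda_E$ is. In the other two cases $f(c_E)=1=c_{f(E)}$ (or $f(E)$ contributes nothing at all). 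Applying $f$ factor by factor to the products in \eqref{eq:boundary-elements} therefore gives $f(\nu_0(H))=\nu_0(J)$ and $f(\gamma(H))=\gamma(J)$.

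The only subtle point is the $A_5$ case for $\gamma$. A component $E\iso L$ with $f(E)\iso A_5$ may have $\lambda_E$ zero or nonzero, and it may or may not occur in the product defining $\gamma(H)$. Either way its factor maps to $1$, and $f(E)$ contributes $c_{f(E)}=1$ to $\gamma(J)$, so nothing breaks. The one thing that must be checked is the equivalence ``$\lambda_E=0$ iff $\lambda_{f(E)}=0$'' for components on which $f$ is injective, and surjectivity of $f$ settles it.

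For the formation property we argue as in Proposition~\ref{prop:formations}. Both classes contain $1$, and they are closed under isomorphisms because the definitions are intrinsic. Naturality gives closure under epimorphic images. For a subdirect product $D\sd H_1\times\cdots\times H_s$ with every $H_i$ in the class, we have $D\in\F$, and each projection $\pi_i$ satisfies $\pi_i(\nu_0(D))=\nu_0(H_i)=1$, and likewise for $\gamma$. The intersection of the kernels of the $\pi_i$ is trivial, so $\nu_0(D)=1$, resp.\ $\gamma(D)=1$.

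For the last two assertions we use a subdirect presentation. Let $H\in\mathfrak R$. Then Lemma~\ref{lem:pure-L-closed} says every $\lambda_E$ vanishes, so $\gamma(H)=\nu_0(H)$ directly. Now let $H\in\mathfrak Q$, and take $f:D\epi H$ with $D\sd A^n$. By Lemma~\ref{lem:subdirect-mixed}, $D'=P_1\times\cdots\times P_r$, and the components of $H$ are the non-trivial images $f(P_j)$ (proof of Proposition~\ref{prop:structure}). As in the proof of Proposition~\ref{prop:ordinary-comparison}, fix a coordinate $i$ of the support of $P_j$ and choose $x\in D$ with $\pi_i(x)=t$. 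Then $x$ induces an outer automorphism on $P_j$, and even on $P_j/Z(P_j)$. So whenever $f(P_j)\iso L$, $f|_{P_j}$ is an isomorphism and $\lambda_{f(P_j)}(f(x))\ne0$. Hence no component isomorphic to $L$ has zero character, while components isomorphic to $A_5$ contribute $c_E=1$. Therefore $\gamma(H)=1$. I expect no real obstacle; the most care is needed in transferring the outer action through $f$.
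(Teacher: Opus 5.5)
Your proposal is correct and follows essentially the same route as the paper. Naturality comes from the component correspondence of Lemma~\ref{lem:epimorphism}, with surjectivity of $f$ giving $\lambda_E=0$ if and only if $\lambda_{f(E)}=0$, and closure under subdirect products comes from the coordinate projections; for $\mathfrak R$ you use the last part of Lemma~\ref{lem:pure-L-closed}, and for $\mathfrak Q$ you take an element of $D\sd A^n$ with coordinate $t$, which induces an outer automorphism on the diagonal block and hence a non-zero value of $\lambda_E$.
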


\begin{proof}
We use the correspondence between the components of $H$ and $J$
established in the proof of Lemma~\ref{lem:epimorphism}. If a
component $E\iso L$ of $H$ is mapped onto a component isomorphic to
$L$, then $f|_E$ is an isomorphism and
\[
 \lambda_E=\lambda_{f(E)}\circ f.
\]
Since $f$ is surjective, $\lambda_E$ is zero if and only if
$\lambda_{f(E)}$ is zero. Moreover, $f(c_E)=c_{f(E)}$. If $E$ is
mapped onto $1$ or onto a group isomorphic to $A_5$, then $f(c_E)=1$.
Finally, components isomorphic to $A_5$ contribute $1$ to both
products. These observations prove both equalities. The classes
$\mathfrak U_0$ and $\mathfrak U_{\mathrm{in}}$ are non-empty and
closed under isomorphisms. They are closed under taking epimorphic
images by the equalities just proved, and they are closed under finite
subdirect products, since we may apply these equalities to the
coordinate projections, whose kernels have trivial intersection.

Now let $H\in\mathfrak Q$, and write $H=f(D)$, where $D\sd A^a$. If
$E\iso L$ is a component of $H$, then $E$ is the isomorphic image of
some diagonal $L$-block of $D'$. Choose a coordinate in this block.
Since $D$ is subdirect, some element of $D$ has $t$ as its coordinate
in this place; this element induces an outer automorphism of the
block, even when the block is twisted. Hence its image in $H$ gives a
non-zero value of $\lambda_E$. Thus no component of $H$ isomorphic to
$L$ contributes to $\gamma(H)$, and since the components isomorphic to
$A_5$ contribute $1$, we have $\gamma(H)=1$. The assertion for $\mathfrak R$ follows from the last
part of Lemma~\ref{lem:pure-L-closed}. The lemma is proved.
\end{proof}

\begin{proposition}\label{prop:recovery-boundary}
Let
\[
 B=(A\times L)/\langle(z,z)\rangle,
 \qquad \mathfrak B=\form(B).
\]
Then $|B|=14400$, $\mathfrak B\subseteq\F$, and
\begin{equation}\label{eq:recovery-fails}
 (\mathfrak B\cap\mathfrak Q)\vee(\mathfrak B\cap\mathfrak R)
 \subsetneq\mathfrak B.
\end{equation}
In particular, $\mathfrak B$ is not $S_n$-closed.
\end{proposition}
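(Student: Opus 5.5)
The order is immediate: $|A\times L|=240\cdot120=28800$, and $\langle(z,z)\rangle$ has order $2$, so $|B|=14400$. Also $B$ is a quotient of $G_0$, hence $B\in\F$ and $\mathfrak B\subseteq\F$. To prove \eqref{eq:recovery-fails}, I will separate $B$ from the join by one of the ordinary formations of Lemma~\ref{lem:boundary-natural}; the natural candidate is $\mathfrak U_{\mathrm{in}}\cap\mathfrak U_0$, or possibly an intersection involving $\gamma$ and $\nu_0$.

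First I compute the invariants of $B$. The components of $B$ are the images $E_1$ of $L\times1$ and $E_2$ of $1\times L$. Both are isomorphic to $L$, since $\langle(z,z)\rangle$ meets neither factor, and both have the same central involution $c=\overline{(z,1)}=\overline{(1,z)}\ne1$. The element $t$ acts outerly on $E_1$ and trivially on $E_2$, so $\lambda_{E_1}\ne0$ and $\lambda_{E_2}=0$. Hence $\gamma(B)=c\ne1$ and $\nu_0(B)=c\cdot c=1$.

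Next I show that every group $H$ in the join satisfies $\gamma(H)=\nu_0(H)$. For $H\in\mathfrak B\cap\mathfrak Q$, Lemma~\ref{lem:boundary-natural} gives $\gamma(H)=1$, and $\nu_0(H)=1$ because $\nu_0(B)=1$ and, by that lemma, $\mathfrak U_0$ is a formation containing $B$, so it contains $\mathfrak B$. For $H\in\mathfrak R$, the lemma gives $\gamma(H)=\nu_0(H)$. So both generating classes lie in $\mathfrak U_0\cap\mathfrak U_{\mathrm{in}}'$, where $\mathfrak U_{\mathrm{in}}'=\mathfrak U_{\mathrm{in}}$ on the $\mathfrak Q$ side, and lie in $\mathfrak U_{\mathrm{in}}$ on the $\mathfrak R$ side only when $\nu_0=1$.

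Since $\mathfrak B\cap\mathfrak R\subseteq\mathfrak B\subseteq\mathfrak U_0$, any $H\in\mathfrak B\cap\mathfrak R$ has $\gamma(H)=\nu_0(H)=1$. Therefore both $\mathfrak B\cap\mathfrak Q$ and $\mathfrak B\cap\mathfrak R$ lie in the formation $\mathfrak U_{\mathrm{in}}$, and so does their join. Since $\gamma(B)\ne1$, we get $B\notin\mathfrak U_{\mathrm{in}}$, which gives the strict inclusion \eqref{eq:recovery-fails}. Proposition~\ref{prop:recovery} then shows that $\mathfrak B$ is not $S_n$-closed. Alternatively, this follows directly: $E_2\iso L$ is normal in $B$, but $\nu_0(L)\ne1$, so $L\notin\mathfrak B$.

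The main obstacle is the component bookkeeping for $B$: checking that $E_1\ne E_2$, that the quotient identifies their centres, and that $\lambda_{E_2}=0$. The last point holds because elements of $A\times L$ act on $1\times L$ by inner automorphisms of $L$. All of this reduces to Lemma~\ref{lem:base} and the description of $\Comp$ given by Proposition~\ref{prop:structure}.
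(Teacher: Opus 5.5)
Your proposal is correct and follows the paper's proof. You compute $\gamma(B)\neq1$ and $\nu_0(B)=1$, then use $\mathfrak B\subseteq\mathfrak U_0$ together with $\gamma=1$ on $\mathfrak Q$ and $\gamma=\nu_0$ on $\mathfrak R$ to place the join inside $\mathfrak U_{\mathrm{in}}$, which excludes $B$. The detour through the undefined class $\mathfrak U_{\mathrm{in}}'$ in your third paragraph is unnecessary, since your fourth paragraph already gives the clean argument. Deducing non-$S_n$-closedness from Proposition~\ref{prop:recovery} is a valid alternative to the paper's direct observation that $L\notin\mathfrak B$, which you also give.
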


\begin{proof}
Since $B$ is a quotient of $A\times L$, we have $B\in\F$. The images
of the two copies of $L$ are distinct normal components of $B$
isomorphic to $L$, and their centres are identified with the same
non-identity element $z_*$. The outer-action character of the first
component is non-zero, and that of the second is zero. Therefore
\[
 \nu_0(B)=z_*^2=1,\qquad \gamma(B)=z_*\neq1.
\]
By Lemma~\ref{lem:boundary-natural}, $\mathfrak B\subseteq\mathfrak U_0$.
Every group in $\mathfrak B\cap\mathfrak Q$ satisfies $\gamma=1$, and
for the groups in $\mathfrak B\cap\mathfrak R$ the equality
$\gamma=\nu_0$ gives the same conclusion. Consequently, the join of
these two formations is contained in $\mathfrak U_{\mathrm{in}}$,
whereas $B\notin\mathfrak U_{\mathrm{in}}$. This proves
\eqref{eq:recovery-fails}.

Finally, $\nu_0(L)=z\neq1$, so $L\notin\mathfrak B$, although $B$ has
a normal component isomorphic to $L$. Hence $\mathfrak B$ is not
$S_n$-closed. Clearly, $|B|=240\cdot120/2=14400$. The proposition is
proved.
\end{proof}

\begin{remark}\label{rem:boundary-scope}
Note that the formation $\mathfrak B$ is one-generated. Thus
Proposition~\ref{prop:recovery-boundary} shows only that the
hypothesis of $S_n$-closedness in Proposition~\ref{prop:recovery}
cannot be omitted; it does not give an ordinary subformation of $\F$
which is not one-generated.
\end{remark}

\end{document}